\newtheorem{mypro}{Proposition}
\newtheorem{mytho}{Theorem}
\newtheorem{mydef}{Definition}
\def\bE{{\mathbb{E}}}
\def\bR{{\mathbb{R}}}
\def\btheta{\boldsymbol\theta}
\def\eps{\varepsilon}
\title{Parametric Estimation from Approximate Data: Non-Gaussian Diffusions}
\date{\today}
\author{
Robert Azencott
\thanks{Department of Mathematics,
University of Houston,
Houston, TX 77204-3008, (\texttt{razencot@math.uh.edu}).}
\and
Peng Ren
\thanks{Department of Mathematics,
University of Houston,
Houston, TX 77204-3008, (\texttt{pren@math.uh.edu}).}
\and
Ilya Timofeyev
\thanks{Department of Mathematics,
University of Houston,
Houston, TX 77204-3008, (\texttt{ilya@math.uh.edu}).}
}
\begin{document}
\maketitle
\begin{abstract}
We study the problem of parameters estimation in Indirect Observability contexts, where $X_t \in R^r$ is an unobservable stationary process parametrized by a vector of unknown parameters and all observable data are generated by an approximating process $Y^{\eps}_t$ which is close to $X_t$ in $L^4$ norm.
We construct consistent parameter estimators which are smooth functions of the sub-sampled empirical mean and empirical lagged covariance matrices computed from the observable data. We derive explicit optimal sub-sampling schemes specifying the best paired choices of sub-sampling time-step and number of observations. 
We show that these choices ensure that our parameter estimators reach optimized asymptotic $L^2$-convergence rates, which are constant multiples of the $L^4$ norm 
$|| Y^{\eps}_t - X_t ||$.
\end{abstract}
%%%%%%%%%%%%%%%%%%%%%%%%%%%%%%
%%%%%%%%%%%%%%%%%%%%%%%%%%%%%%

Keywords: Parametric Estimation, Non-Gaussian Diffusions, Empirical covariance estimators, Indirect observability

\section{Introduction}

%%%%%%%%%%%%%%%%
The amount of available observational data has increased massively in recent years due to rapid technological 
advances in science and engineering. Often, it is desirable to fit an appropriate parametrized stochastic model to 
the available data and then use this model for forecasting, analysis, etc. Stochastic processes $X_t$ driven by 
systems of stochastic differential equations (SDEs) have often been used for this purpose so that both parametric and non-parametric techniques for fitting SDEs to the available data have a rich history (see \cite{kut2004,heyde97,rao99,Azen4} for a general overview).
Non-parametric approaches for SDE data modeling have used Bayesian methods as in \cite{bayes1,bayes2,bayes3,bayes4,ecs01}, exploited 
the spectral properties of the infinitesimal generator \cite{hst98,crove06b,crova11}, or have developed maximum likelihood function estimation as in \cite{kelly2004,sorensen97,kessler1999}, as well as drift and diffusion estimates by 
conditional expectations of process dynamics over short time intervals \cite{estim1,estim2,estim3,suba02,berner05,comte07}, with potential use of kernel based techniques as in 
\cite{baph03,zanten01}. 
For parametrized SDE models, various moments based parameter estimators (see \cite{gallant97} and references 
therein) have been implemented, as well as approximate maximum-likelihood parameters estimators after time 
discretization of the SDEs (see for instance \cite{as02,brsc02,pedersen95,gadhyan15}).
Minimum-contrast estimators have also been used for parametric estimation of diffusions \cite{dcd86,gcjl99}.
The asymptotic consistency and efficiency of SDEs model fitting to data have been well analyzed in the literature, 
although computational issues remain key questions in high dimensional applications. 

In this paper we address the 
problem of parameter estimation for multi-dimensional diffusions in a specific context, namely in situations when the 
process $X_t$ to be modeled is not directly observable (i.e. $X_t$ is \emph{unobservable}).
Instead, the observed data are generated by an approximating process $Y^\eps_t$ which involves a small 
\lq\lq{}scaling\rq\rq{} parameter $\eps$, and the SDEs driving $X_t$ are discovered by asymptotic analysis 
as $\eps \to 0$. Moreover, often the precise dynamics of the process $Y^\eps_t$ is not known or too complex 
to be explicitly formalized.
In such \emph{Indirect Observability} contexts, where the SDE system driving the unobservable $X_t$ is 
parametrized by an unknown vector $\btheta$, but the observable $Y^\eps_t$ is generated by unknown dynamics 
approximating the behavior of $X_t$, a natural goal is to efficiently estimate $\btheta$ from the observable data $Y^\eps_t$, assuming that for some adequate norm $Y^\eps_t \to X_t$ as $\eps \to 0$.
The main mathematical goal in this indirect observability context is to estimate the underlying SDE parameters 
by classes of estimators depending only on $\eps$ and on the observable data $Y^{\eps}_t$; of course 
as $\eps \to 0$, one wants these estimators to be consistent and approximately efficient. Recent references 
for consistent estimation of parametrized diffusions under indirect observability include 
\cite{past07,papast09,Azen1,Azen2,Azen3,crova11,dgcs14,disa14,gcjl99}.
In financial applications, indirect observability situations emerge as soon as dynamic models involve the (unobservable) volatilities  of assets, and their replacement by observable approximations of volatilities, such as 
\lq{}\rq{}realized volatilities\rq{}\rq{} computed from stock prices 
(see, e.g., \cite{Nielsen1, Nielsen2, Nielsen3, gcjl98, chpod12, boin02}). 
Indirect observability is also a natural context for stock prices dynamics  based on  precise noise microstructure 
(see, e.g. \cite{zms2005, smz2005}).

Our indirect observability framework covers a broad range of SDE systems driving unobservable multidimensional 
processes $X_t$, where the drift and diffusion terms are fairly generic smooth functions of an unknown parameter 
vector $\btheta$. Such SDEs are widely used in engineering and automatic control, population evolution, atmospheric 
and ocean dynamics, stock prices dynamics, options pricing, etc. to approximate the behavior of the leading variables 
of interest. To construct consistent estimators $\hat{\btheta}$ of $\btheta$ based on observed trajectories 
$Y^\eps_{[0,t]}$ of the approximate data, a natural approach is to first derive \lq\lq{}ideal\rq\rq{} estimators as 
specific functionals $\phi(X_{[0,t]})$ of the unobservable trajectory $X_{[0,t]}$, and then prove that under adequate 
conditions $\phi(Y^\eps_{[0,t]}) \to \btheta$ as $\eps\to 0$ and $N(\eps) \to \infty$, where $N(\eps)$ is the size of 
the observational data sample.
In \cite{Azen1, Azen2, Azen3}, we had combined this approach with data sub-sampling to generate consistent 
estimators $\hat{\btheta}$ based on approximate observable data $Y_t^\eps$ with fairly generic joint distributions, 
but for Gaussian limiting processes $X_t$, and we had also determined how nearly optimal sub-sampling rates should 
depend on $\eps$.
In this paper we extend our indirect observability analysis to stationary processes $X_t$ such that $Y^\eps_t \to X_t$ 
in $L^4$ as $\eps \to 0$, but with weak restrictions on the joint distributions of $X_t$ and $Y^\eps_t$, which can both 
be \emph{non-Gaussian}.

Discretized sub-sampling is standard to collect data from a continuous process. The observable data are then of the 
form $Y^\eps_{n \delta}$, where the observational time-step $\delta$ is determined by data acquisition protocols 
for sensors recordings or by the computational time-step for observables generated by numerical PDE models. As is 
well known, adequate data sub-sampling can reduce computational overhead without sacrificing estimators accuracy. 
So we introduce a user selected subsampling time-step $\Delta(\eps)$, which will be a multiple of $\delta$ and will 
fix the observational sample $\{Y^\eps_{n \Delta(\eps)},~ n=1,\ldots,N(\eps)\}$ retained to estimate ${\btheta}$. 
We then adress the  key issue of how to optimize parameter estimators performance by 
seeking the \lq\lq{}best\rq\rq{} asymptotic  
choices for the number of observations $N(\eps)$ and the sub-sampling time-step $\Delta(\eps)$. 
We thus derive explicit 
relations between $\Delta(\eps)$, $N(\eps)$, and $\rho(\eps) = \| Y^\eps_t - X_t\|_4$ ensuring nearly optimal behavior for parameter estimation errors as $\eps \to 0$. This is achieved by focusing on estimators which are (not necessarily explicit) smooth functions of empirical lagged moments up to order two computed from the $N(\eps)$ 
sub-sampled observables $\{Y^\eps_{n \Delta(\eps)},~ n=1,\ldots,N(\eps)\}$. Since in practice the available numbers of observations remain moderate, our nearly optimal  choices for the pair $(N(\eps), \Delta(\eps))$ are constructed to simultaneously minimize the  size of parameter estimation errors and the computational/observational complexity.

The paper is organized as follows. Basic assumptions about our indirect observability setup are given in section \ref{IOS}. Our  main results about speed of convergence for parameters estimators based on observable data and the associated characterization of optimal sub-sampling schemes are presented in section \ref{sec:indirect}. 
In section \ref{parestim} we outline the main class of parameter estimators  studied here, namely smooth functions of lagged moments of order $\leq 2$.
Section \ref{consistentUnobservable} contains key technical results on $L^2$ consistency of
moments estimators computed from  unobservable data.
Potential applications of our results to stationary multi-dimensional diffusions and to Heston SDEs are discussed in the two sections  \ref{diffusions} and  \ref{sec:appl}.
%%%%%%%%%%%%%%%%%%%%%%%%%%%%%%

\section{Indirect Observability Setup}
\label{IOS}
\subsection{Basic Indirect Observability Hypotheses} \label{BIOH}
\paragraph{Notations.} For any matrix $M = ( M_{i, j} ) $, we set 
$\| M \|= \sup_{i, j} M_{i, j}$, and $M^*$ is the transpose of $M$.
The $L^p$ norm of a random matrix $M$ is denoted by $\| M \|_p = \bE(\|M\|^p) ^{1/p}$.

\paragraph{Indirect Observability.} 
Our formal \emph{indirect observability setup} $X_t = \lim_{\eps \to 0} Y^\eps_t $ involves \\
-- a set of \emph{directly observable} continuous time stochastic processes $Y^\eps_t \in \bR^r$ indexed by 
a small ``scale'' parameter $\eps >0$, with $L^4$ norms $\| Y_t^\eps\|_4$ uniformly bounded for all $\eps >0$ 
and $t \geq 0$, \\
-- an \emph{unobservable} strictly stationary process $X_t = \lim_{\eps \to 0} Y^\eps_t $, where convergence holds at 
uniform $L^4$-speed $\rho(\eps)\to 0 \text{~as~} \eps\to 0$, so that
\begin{equation} \label{L4speed}
\| Y_t^\eps - X_t \|_4 \leq \rho(\eps), \text{~for all~} t \geq 0, \, \eps > 0.
\end{equation}
Moreover, we will assume that \\
-- the mean $\mu$ of $X_t$ and the lagged covariance matrices $K(u) = \bE (X_t X_{t+u} ^*)- \mu \mu^*$ are $C^1$ functions of an \emph{unknown} 
parameter vector $\btheta \in \Theta$, with $\Theta$ open in $\mathbb{R}^p$, \\
-- as functions of the time lag $u $, the matrices $K(u)$ are locally Lipschitz, uniformly in $\btheta$.

Since $X_t$ is \emph{not} directly observable, our main goal here is to construct \emph{observable estimators} 
$\btheta^{\eps} $ of $\btheta$, i.e. estimators depending only on the observable data $Y_t^{\eps}$, and hence 
of the form 
\begin{equation}\label{def12}
\btheta ^{\eps} = \phi^{\eps} ( Y^\eps_{t_1}, \ldots , Y^\eps_{t_q} ),
\end{equation}
where the number $q$ of observables and the instants $t_1, \ldots, t_q$ depend on $\eps$, and each Borel 
function $\phi^\eps : \bR^q \to \bR^m$ is deterministic. As $\eps \to 0$, achieving consistency in probability for the estimators $\btheta^{\eps}$ will require to specify nearly optimal choices for $q(\eps)$ and for the time 
grid $t_1(\eps), \ldots, t_q(\eps)$, and to clarify how these choices depend on the approximation speed 
$\rho(\eps)$.

\subsection{Multi-dimensional Diffusions under Indirect Observability} 
\label{diffusions}
\paragraph{Stationary multi-dimensional diffusions.} 
Many practical examples of indirect observability are linked to the 
approximation of high dimensional multiscale systems by reduced stochastic differential equations obtained by averaging and homogenization \cite{book:past08, mtv6, melst11, jative14, past07, papast09, crova11, Azen3}. For instance this is  the case of  homogenization applications to  the atmosphere-ocean science \cite{mtv2, mtv3, frmava05, frma06, dat13}. In those contexts 
the fully explicit mathematical dynamics for the observable $Y^\eps_t$ is typically too complex to be fully modeled. However, the unobservable limit process $X_t \in \bR^r $ is often modeled by a relatively low-dimensional \emph{strictly stationary multi-dimensional diffusion} driven by an SDE system with coefficients depending smoothly 
on the parameter vector $\btheta \in \Theta \subset \bR^p$. These SDE systems, derived by an ad-hoc analysis 
of the main \lq\lq{}mechanisms\rq\rq{} generating the $Y^\eps_t$ observables, are of the form
\begin{equation} \label{SDE}
dX_t = b(X_t, \btheta) dt + \sigma(X_t, \btheta) dW_t,
\end{equation}
where \\
-- $W_t$ is a multi-dimensional Brownian motion, \\
-- the drift $b(x, \btheta)$ and the matrix $\sigma(x, \btheta)$ are $C^{\infty}$ functions of $x \in \bR^r $ 
and $\btheta \in \bR^p$, \\
-- the matrix $a(x, \btheta) = \frac{1}{2} \sigma \sigma*$ is invertible for all $x $ and $\btheta$. \\
The transition density $p_{\btheta} (t,x,y)$ of $X_t$ then depends smoothly on $\btheta$ 
\cite{stva05}
and is the fundamental solution of a parabolic PDE with coefficients $\frac{1}{2} a$ and $b$. 
The literature (see for instance \cite{lsu,iko,fr}) has extensively discussed these Fokker-Plank PDEs, as well as 
the elliptic PDE verified by the stationary density $q_{\btheta} (x,y) = \lim_{t \to \infty} p_{\btheta} (t,x,y)$, 
when $X_t$ is strictly stationary. 
In dimension 1, strict stationarity of $X_t$ is equivalent to integrability in $x \in \bR$ of $p(x) = \frac{1}{a(x)}exp(\frac{b(x)}{a(x)})$ and (see \cite{book:karlintaylor}) 
the stationary density of $X_t$ is then proportional to $p(x)$.   
In higher dimensions, the literature does not seem to provide easy to use sufficient conditions for stationarity, so that 
stationarity has to be verified in each specific application.

\paragraph{Parameter estimation for diffusions.} The abundant literature on parameter estimation for 
multi-dimensional diffusions such as $X_t$ 
(see overviews \cite{kut2004,heyde97,rao99} and references therein) 
offers a broad range of parameter estimators which are non explicit but numerically computable functions
\begin{equation} \label{gq}
\hat{\btheta} = g_q ( X_{t_1}, \ldots, X_{t_q} )
\end{equation}
of $q$ diffusion \lq\lq{}data\rq\rq{} $X_{t_1}, \ldots, X_{t_q}$. Estimators of this type can for instance be 
numerically derived by approximate maximum likelihood after time discretization 
(see, e.g., \cite{gadhyan15,pedersen95,as02,ecs01,rao99,heyde97,kut2004,cat10}).

Under variously formulated sufficient conditions on the diffusion, maximum likelihood estimators of $ \btheta$ 
become asymptotically consistent for $q$ sufficiently large and for dense enough specific time grids 
${t_1, \ldots, t_q}$. However in our indirect observabilty setup, these estimators are obviously not ``observable'', 
since they involve the non-observable diffusion data $ X_{t_1}, \ldots, X_{t_q} $. When \lq\lq{}good\rq\rq{} 
choices are known for the $g_q$ functions in \eqref{gq}, one can naturally attempt to construct consistent 
observable estimators $\btheta ^{\eps}$ by setting 
\[
\btheta ^{\eps} = g_q ( Y^\eps_{t_1}, \ldots , Y^\eps_{t_q} ),
\]
with \lq{}\lq{}adequate\rq{}\rq{} 
choices for the number of observations $q(\eps)$ and for the time grid $t_1(\eps) , \ldots, t_q(\eps)$.
A key technical goal in our paper is to 
\lq{}\lq{}optimally\rq{}\rq{} select  $q(\eps)$ and this time grid  as   functions of $\eps$, and to link such 
nearly optimal sub-sampling schemes to the $L^2$-speed of approximation $\rho(\eps)$ of $X_t$ by the 
observable $Y^\eps_t$.

\paragraph{Sparsely parametrized stationary diffusions.} For ``uniformly elliptic'' stationary diffusions $X_t$ driven 
by equation \eqref{SDE}, the matrix $a$, its inverse $a^{-1}$, and the drift $b$ are classically assumed to be 
uniformly bounded for $x \in \bR^r $ and $\btheta \in \bR^p$. 
The well known Aronson bounds of the diffusion transition density $p_{\btheta}(t,x,y)$ (see \cite{aron}) then imply the finiteness of all lagged moments of arbitrary order for $X_t$, and due to the smoothness of $p_{\btheta}$, 
all these lagged moments are then necessarily smooth functions of $\btheta$.

\paragraph{Conjecture.} On the basis of multiple concrete examples of stationary diffusions, we conjecture that 
when the matrices of diffusion coefficients $a$ and $b$ in equation \eqref{SDE} are \emph{analytic functions} of 
the $p$-dimensional parameter vector $\btheta$, one can then find a vector $\Psi = \left[ \Psi_1, \ldots, \Psi_p \right]$ 
of $p$ lagged moments of order $\leq 2$ of $X_t$, which uniquely determine $\btheta$ as a (non explicit) smooth 
function $\btheta = G(\Psi)$ of $\Psi$.
For such ``sparsely parametrized'' diffusions, parameter estimators based on estimation of lagged moments become natural targets, as outlined in the next section.

\section{Parameter estimators based on lagged moments of order $\leq 2$}
\label{parestim}

\subsection{Sparsely parametrized stationary processes}
\label{sparse.parametrization}
The preceding conjecture on multidimensional stationary diffusions leads us to introduce the following definition.
\begin{mydef}
\label{sparse.def} 
We say that a strictly stationary process $X_t$ is \emph{sparsely parametrized} by $\btheta \in \bR^p$ whenever 
one can find a vector $\Psi = \left[ \Psi_1, \ldots, \Psi_p \right]$ of $p$ lagged moments of order $\leq 2$ of $X_t$, 
which uniquely determine $\btheta$ as a smooth function $\btheta = G(\Psi)$ of $\Psi$.
\end{mydef}
Let $X_t$ be a sparsely parametrized stationary process embedded in an indirect observability setup 
$X_t = \lim_{\eps \to 0}Y^\eps_t $. As $\eps \to 0$, constructing consistent \textit{observable estimators} for 
all lagged moments of order $\leq 2$ of $X_t$ is then clearly equivalent to constructing consistent observable 
estimators $\hat{\btheta}^\eps$ for $\btheta$. Indeed if $\btheta = G(\Psi)$, one can simply set 
$\hat{\btheta}^\eps = G(\hat{\Psi}^\eps)$, where $\hat{\Psi}^\eps$ is an observable estimator of the vector of 
moments $\Psi$. Observable estimators for the lagged 2nd order moments $\Psi_j$ are naturally provided by the 
empirical lagged covariances of $Y^\eps_t$, and hence consistent observable estimators of $\btheta$ can then 
be constructed as smooth functions of empirical lagged moments of order $\leq 2$ of the observable data $Y^\eps_t$.

\subsection{Sub-sampled Empirical Moments}
As pointed out in many papers on indirect observability (see, e.g., \cite{past07,papast09,Azen3,gcjl98,crova11}), 
correctly scaled sub-sampling of observable data can reduce computational overhead and decrease the 
bias of parameter estimators. 
%computed after adequate subsampling of these observable data. \\
A main technical point here will then be to select nearly optimal functions of $\eps$ for the sub-sampling time 
step $\Delta(\eps)$ of observable data as well as for the overall number $N(\eps)$ of sub-sampled observables.
So we now define adaptive sub-sampling schemes for empirical estimators of lagged covariances. 
\begin{mydef}
\label{adaptive sampling}
\emph{Adaptive sub-sampling schemes} will be specified by two functions of $\eps$: the sub-sampling time step 
$\Delta = \Delta(\eps) > 0$ and the number $N = N(\eps) $ of sub-sampled observations, and we will impose 
the natural conditions
\begin{equation}\label{N Delta}
\Delta(\eps) \to 0 \quad \text{and} \quad N(\eps) \Delta(\eps) \to \infty \; \quad \text{as} \quad \eps \to 0.
\end{equation}
\end{mydef}
Fix an adaptive sub-sampling scheme $\Delta(\eps), N(\eps)$. Each time lag $u$ will then be approximated by 
$\kappa \Delta$ with the integer $ \kappa$ given by
\begin{equation}\label{kappa}
\kappa= \kappa (u, \eps) = \bigg[ \frac{u}{\Delta(\eps)} \bigg] = \;\; \text{closest integer to} \; \; 
\frac{u}{\Delta(\eps)}.
\end{equation}
Note that $\kappa(0,\eps) = 0$, and that \eqref{N Delta} implies 
\[
\lim_{\eps \to 0} \kappa(u,\eps) = \infty \quad \text{and} \quad 
\lim_{\eps \to 0} \frac{\kappa(u,\eps)}{N(\eps)} \to 0.
\]
We estimate the mean $\mu$ of $X_t$ by the (observable) empirical mean 
\begin{equation}\label{barY}
\bar{Y}^\eps = \frac{1}{N}\sum_{n=1}^{N} Y^\eps_{n \Delta}.
\end{equation}
\begin{mydef}
\label{def4}
For any given time lag $u$, let $\kappa = \kappa(u,\eps) $ be as in equation \eqref{kappa}. Define the time-shifted empirical mean 
$ \tau_u \bar{Y}^\eps $ by 
\begin{equation}\label{taubarY}
\tau_u \bar{Y}^\eps = \sum_{n = 1}^{N } \; Y^\eps_{( n + \kappa ) \Delta }.
\end{equation}
We then estimate the lagged covariance matrix $K(u)$ of $X_t$ by the observable sub-sampled empirical covariances 
\begin{equation}\label{DefCovYep}
\hat{K}^\eps_Y(u) = \frac{1}{N} \, 
\left[ \; \sum_{n = 1}^{N } \; Y^\eps_{n \Delta} \, ( Y^\eps_{( n + \kappa ) \Delta})^* \; \right] - 
\bar{Y}^\eps ( \tau_u \bar{Y}^\eps)^*,
\end{equation}
where $N= N(\eps)$, $\Delta= \Delta(\eps)$, $\kappa= \kappa (u, \eps)$
\end{mydef}
We always assume that $\eps$ is small enough to force $N \gg \kappa$.

\subsection{Sensitivity of Observable Covariances Estimators to Data Approximation}
In formulas \eqref{barY} and \eqref{DefCovYep}, replacing all observable $Y^\eps_t$ by their limits $X_t$ transforms the observable estimators $\bar{Y}^\eps$, $\tau_u \bar{Y}^\eps$ and $\hat{K}^\eps_Y(u)$ into \textit{unobservable} estimators $\bar{X}^\eps$, $\tau_u \bar{X}^\eps$ and $\hat K^\eps_X(u)$ given by
\begin{eqnarray}
\bar{X}^\eps = \frac{1}{N}\sum_{n=1}^{N} X_{n \Delta}, 
\label{barX} \\
\tau_u \bar{X}^\eps = \frac{1}{N} \sum_{n = 1}^{N } \; X_{(n +\kappa) \Delta },
\label{taubarX}
\end{eqnarray}
and
\begin{equation}\label{DefCovXep}
\hat{K}^\eps_X(u) = 
\frac{1}{N} \left[ \; \sum_{n = 1}^{N } \; X_{n \Delta} \, X_{( n + \kappa ) \Delta }^* \; \right] 
- \bar{X}^\eps ( \tau_u \bar{X}^\eps)^* = 
\frac{1}{N} \sum_{n = 1}^{N } \; (X_{n \Delta} - \bar{X}^\eps ) \, (X_{(n +\kappa) \Delta} - \tau_u \bar{X}^\eps )^* ,
\end{equation}
where, as before, $N= N(\eps), \Delta= \Delta(\eps), \kappa= \kappa (u, \eps)$. Obviously, the covariance estimator 
remain unchanged when $X_t$ is replaced by the centered process $X_t - \mu$.
We now evaluate the $L^2$-norm perturbations induced on $\bar{Y}^\eps $ and $\hat{K}^\eps_Y(u)$ when the 
observable data are replaced by their unobservable limits.
\begin{mypro}\label{ThCompareYX}
Consider any indirect observability setup $X_t = \lim_{\eps \to 0}Y^\eps_t $ as in section \ref{IOS}. Let $\rho(\eps)$ 
be the $L^4$-speed of convergence of $Y^\eps_t $ to $X_t$. Let $\nu$ be an upper bound for all the $L^4$ norms 
$\| X_t \|_4$ and $\| Y^\eps_t \|_4$. 
Fix an adaptive sub-sampling scheme $\Delta(\eps), N(\eps)$ verifying \eqref{N Delta}. Then as $\eps \to 0$, the 
observable estimators $\bar{Y}^\eps $ and $\hat{K}^\eps_Y(u)$ defined by \eqref{barY} and \eqref{DefCovYep} 
and their unobservable versions $\bar{X}^\eps$ and $\hat{K}^\eps_X(u)$ verify 
\begin{eqnarray}
\| \hat K_Y^\eps(u) - \hat K_X^\eps(u) \|_2 \leq 4 \nu \rho(\eps) \quad \text{for all}\quad u \geq 0, \label{CompKXKY} \\
\| \bar{Y}^\eps - \bar{X}^\eps \|_4 \leq \rho(\eps) \quad \text{and} \quad
\| \tau_u \bar{Y}^\eps - \tau_u \bar{X}^\eps \|_4 \leq \rho(\eps). \label{CompbarXbarY}
\end{eqnarray}
\end{mypro}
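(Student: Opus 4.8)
The plan is to reduce the whole statement to Minkowski's inequality in $L^2$ and $L^4$, combined with one Cauchy--Schwarz bound for the $L^2$ norm of a product. Two elementary facts would be recorded first. Fact (i): for scalar random variables $P,Q$, $\| PQ\|_2 = \bE(P^2Q^2)^{1/2} \le \bE(P^4)^{1/4}\bE(Q^4)^{1/4} = \|P\|_4\|Q\|_4$ by Cauchy--Schwarz. Fact (ii): for random vectors $P,Q\in\bR^r$ the outer product satisfies $\|PQ^*\| = (\sup_i |P_i|)(\sup_j|Q_j|)$ in the matrix sup-norm, hence $\|PQ^*\|_2 \le \|P\|_4\|Q\|_4$, where on the right $\|\cdot\|_4$ denotes the $L^4$ norm of the random sup-norm of the vector. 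Finally I would record the algebraic identity $PQ^* - P'Q'^* = (P-P')Q^* + P'(Q-Q')^*$, which turns a difference of (outer) products into a sum of products each carrying exactly one ``small'' factor.

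For \eqref{CompbarXbarY}: write $\bar Y^\eps - \bar X^\eps = \frac1N \sum_{n=1}^N (Y^\eps_{n\Delta} - X_{n\Delta})$ and apply Minkowski in $L^4$, so that $\|\bar Y^\eps - \bar X^\eps\|_4 \le \frac1N \sum_{n=1}^N \|Y^\eps_{n\Delta} - X_{n\Delta}\|_4 \le \rho(\eps)$ by \eqref{L4speed}; the same computation applied to the shifted averages yields $\|\tau_u\bar Y^\eps - \tau_u\bar X^\eps\|_4 \le \rho(\eps)$. Along the way I also note the uniform bounds $\|\bar X^\eps\|_4 \le \nu$, $\|\bar Y^\eps\|_4 \le \nu$, $\|\tau_u\bar X^\eps\|_4 \le \nu$, $\|\tau_u\bar Y^\eps\|_4 \le \nu$, each again by Minkowski together with $\|X_t\|_4,\|Y^\eps_t\|_4\le \nu$.

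For \eqref{CompKXKY}: decompose $\hat K^\eps_Y(u) - \hat K^\eps_X(u) = A - B$, where $A$ is the difference of the empirical second-moment sums $\frac1N\sum_{n=1}^N Y^\eps_{n\Delta}(Y^\eps_{(n+\kappa)\Delta})^* - \frac1N\sum_{n=1}^N X_{n\Delta}(X_{(n+\kappa)\Delta})^*$, and $B = \bar Y^\eps(\tau_u\bar Y^\eps)^* - \bar X^\eps(\tau_u\bar X^\eps)^*$ is the difference of the mean-correction terms. In $A$, apply the product identity termwise with $P = Y^\eps_{n\Delta}$, $Q = Y^\eps_{(n+\kappa)\Delta}$, $P' = X_{n\Delta}$, $Q' = X_{(n+\kappa)\Delta}$, bound the small factor $\|P-P'\|_4$, $\|Q-Q'\|_4$ by $\rho(\eps)$ and the other factor by $\nu$ via Fact (ii), and then apply Minkowski over $n$ in $L^2$; this gives $\|A\|_2 \le 2\nu\rho(\eps)$. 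For $B$, apply the same identity to the pair $(\bar Y^\eps,\tau_u\bar Y^\eps)$ versus $(\bar X^\eps,\tau_u\bar X^\eps)$, and use the bounds of the previous paragraph together with Fact (ii); this gives $\|B\|_2 \le 2\nu\rho(\eps)$. Adding, $\|\hat K^\eps_Y(u) - \hat K^\eps_X(u)\|_2 \le 4\nu\rho(\eps)$.

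The argument is essentially routine once Facts (i)--(ii) are in place; there is no real obstacle, only two bookkeeping points needing care. First, one must make the matrix-norm step of Fact (ii) precise, i.e. that the sup-norm of an outer product factorizes and therefore $\|PQ^*\|_2\le\|P\|_4\|Q\|_4$ rather than a larger dimension-dependent constant. Second, one must keep track of the normalizing $1/N$ factor in the shifted empirical means so that $\|\tau_u\bar X^\eps\|_4$ and $\|\tau_u\bar Y^\eps\|_4$ remain bounded by $\nu$ (the display \eqref{taubarY} is to be read with the same $\frac1N$ normalization as \eqref{taubarX}). Notably, no stationarity, mixing, or distributional assumptions beyond the uniform $L^4$ bounds enter this particular estimate.
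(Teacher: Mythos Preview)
Your argument is correct and is essentially the same as the paper's own proof: both combine the Cauchy--Schwarz bound $\|PQ^*\|_2 \le \|P\|_4\|Q\|_4$, the algebraic splitting of a difference of outer products, and the triangle (Minkowski/convex-combination) inequality to handle the averages, yielding $2\nu\rho(\eps)$ for each of the two pieces. Your remark about the missing $1/N$ in the definition of $\tau_u\bar Y^\eps$ is also on point; the paper's display \eqref{taubarY} is a typo, and your reading is the intended one.
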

Proposition \ref{ThCompareYX} reduces the $L^2$-consistency analysis for our \emph{observable} estimators of lagged moments to $L^2$-consistency 
analysis for the \textit{unobservable} moments estimators based on the $X_t$ data and given by \eqref{barX}, \eqref{taubarX}, \eqref{DefCovXep} 
\begin{proof}
We extend a similar proof given in \cite{Azen3}. The uniform bounds on $\| Y_t^\eps -X_t \|_4$, $\| X_t \|_4$, $\| Y^\eps_t \|_4$ are 
preserved by convex linear combinations, which yields 
\begin{equation}\label{barYminusX}
\| \bar{Y}^\eps \|_4 \leq \nu, \; \; 
\| \bar{X}^\eps \|_4 \leq \nu, \; \; 
\| \bar{Y}^\eps -\bar{X}^\eps \|_4 \leq \rho(\eps) , \; \; 
\| \tau_u \bar{Y}^\eps - \tau_u \bar{X}^\eps \|_4 \leq \rho(\eps).
\end{equation}
Note that for any two random column vectors $A,B \in \bR^m$, one has the elementary inequalities
\begin{equation} \label{norm.AB}
\| A B^* \|_2 \leq \| A \|_4 \| B \|_4 \quad \text{and} \quad \| A^* B \|_2 \leq m \| A \|_4 \| B \|_4 
\end{equation}
Let $V, W, V', W'$ be four random column vectors in $\bR^m$, with $L^4$-norms inferior to $\nu$, and verifying
\begin{equation}\label{lemmahyp}
\| V - V' \|_4 \leq \rho \quad \text{and} \quad \| W - W' \|_4 \leq \rho.
\end{equation}
By \eqref{norm.AB}, the norms $\| V ( W - W' )^* \|_2 $ and $ \| (V-V') (W')^* \|_2 $ are resp. inferior to $\| V \|_4 \| W - W' \|_4 $ and $\| W' \|_4 \|V - V' \|_4 $, so that 
\[
\| V ( W - W' )^* \|_2 \leq \nu \rho \quad \text{and} \quad \| (V-V') (W') ^* \|_2 \leq \nu \rho.
\]
Hence, the random matrices $V W^* $ and $V' (W')^* $ verify 
\begin{equation}\label{lemmaVW}
\| V W^* - V' (W')^* \|_2 = \| V ( W - W' )^* + (V-V') (W') ^* \|_2 \leq 2 \nu \rho .
\end{equation}
The bound \eqref{lemmaVW} can be applied when $V,W,V', W'$ are replaced by 
$ \bar{Y}^\eps, \tau_u \bar{Y}^\eps, \bar{X}^\eps, \tau_u \bar{X}^\eps$, and this yields
\begin{equation}\label{barYY}
\| \bar{Y}^\eps (\tau_u \bar{Y}^\eps) ^* - \bar{X}^\eps (\tau_u \bar{X}^\eps) ^* \|_2 \leq 2 \nu \rho(\eps) .
\end{equation}
The bound \eqref{lemmaVW} and relation \eqref{CompbarXbarY} similarly show that
\begin{equation}
\| Y^\eps_{n \Delta} (Y^\eps_{( n + \kappa ) \Delta }) ^* 
- X_{n \Delta} X_{( n + \kappa ) \Delta } ^* \|_2 \leq 2 \nu \rho(\eps) 
\end{equation}
for all $n, \Delta, \kappa$. Convex combinations preserve this uniform $L_2$-bound, and hence 
\begin{equation}\label{sumYY}
\left| \left| \frac{1}{N} \sum_{n =1}^N \left[ Y^\eps_{n \Delta} (Y^\eps_{( n + \kappa ) \Delta }) ^* 
- X_{n \Delta} X_{( n + \kappa ) \Delta } ^* \right] \right| \right|_2 \leq 2 \nu \rho(\eps) .
\end{equation} 
By definitions of $\hat K_Y^\eps(u)$ and $\hat K_X^\eps(u)$, the inequalities \eqref{barYY} and \eqref{sumYY} conclude the proof.
\end{proof}

\section{$L^2$-consistency of Unobservable Sub-sampled Moments Estimators}
\label{consistentUnobservable}
Since proposition \ref{ThCompareYX} effectively controls in $L^2$-norm the difference between 
observable and unobservable estimators of lagged covariances, we now focus on the consistency of 
sub-sampled empirical covariance estimators based on the \textit{unobservable} 
$X_t$. This will require assuming a fast enough asymptotic decorrelation speed for the process $X_t$.

\subsection{Integrable Decorrelation Rates for 4th Order Moments} 
\label{decorrelation.rates}
\begin{mydef} 
Let $X_t \in \bR^r $ be a stationary process with uniformly bounded moments of order 4.
Denote by $X_t(i)$, $i=1 \ldots r$ the coordinates of $X_t$. For any time interval $U \subset R^+$, let $F_U$ 
be the set of all random variables of the form $X_s(i)$ or $X_s(i) X_t(j) $ for arbitrary $ s, t \in U$ and 
$1 \leq i,j \leq r$.
Let $f(T) > 0$ be a decreasing continuous function of $T>0$ with finite integral $I(f) = \int_1^{\infty} f(T) dT$.
We say that $X_t$ has \emph{integrable decorrelation rate} $f(T)$ if for any disjoint time intervals $U$ and $V$ 
and for any random variables $G \in F_U$ and $H \in F_V$ one has 
\begin{equation}\label{decay4}
\bigg| \mathbb{E}(G H) - \mathbb{E}(G) \mathbb{E}(H) \bigg| \leq f(T) 
\end{equation}
where $ T = \min_{v \in V, u \in U}(v-u)$ is the \emph{time gap} between $U$ and $V$.
\end{mydef}

Definition \eqref{decay4} controls up to moments of order 4 the \lq\lq{}decorrelation\rq\rq{} rate between 
$X_t$ and $X_{t +u}$ when $u \to \infty$, and in particular implies that the lagged covariances matrices 
$K (u)$ of the process $X_t$ decay to zero at rate $f(u)$ when $u \to \infty$. 
The converse is of course not true for non-Gaussian processes $X_t$, but when $X_t$ is a Gaussian with lagged 
covariances $K(u)$ decaying to zero at rate $f(u)$, then $X_t$ necessarily has integrable decorrelation 
rate proportional to $f$, as can be seen from the classical formula
\[
E(Z_1 Z_2 Z_3 Z_4) = \sigma_{1, 2}\sigma_{3, 4} + \sigma_{1, 3}\sigma_{2, 4} 
+ \sigma_{2, 3}\sigma_{1, 4} ,
\] 
where the four random variables $Z_m$ are centered and jointly Gaussian with covariances $\sigma_{m, n}$.

The decorrelation condition \eqref{decay4} is also linked with classical mixing contexts.
Let $\mathcal{F}(U)$ be the set of random variables measurable with respect to the sigma-algebra generated 
by all the $ X_s $ with $s \in U$. 
To extend \eqref{decay4} to random variables $G \in \mathcal{F}(U)$ and $H \in \mathcal{F}(V)$ it is 
necessary to require a stronger \lq\lq{}mixing\rq\rq{} property for $X_t$. Recall that when $X_t$ is an ergodic 
stationary process having the $\phi$-mixing property (see, for instance, \cite{brad05} for a recent survey), 
there is a fixed decay rate $\phi(T) >0$ tending to 0 as $T \to \infty $, such that for any disjoint time 
intervals $U$ and $V$ separated by a time gap $T> 0$, and for any pair of events $A, B$ verifying 
$1_A \in \mathcal{F}(U)$ and $ 1_B \in \mathcal{F}(V)$ one has 
\[
| P(B \; | \; A) - P(B) | \leq \phi(T) .
\]
Provided the $X_t$ are in $L^4$, these uniform dependency decay rates will typically imply the validity of 
our condition \eqref{decay4} for some decorrelation function $f(T)$ deduced from $\phi(T)$.

\subsection{Accuracy of Unobservable Sub-sampled Empirical Covariance Estimators}
\label{sec:accuracy}
We now prove $L^2$- consistency for the \textit{unobservable} sub-sampled empirical estimators of 
lagged covariance matrices. As could be expected from earlier results, 
integrable decorrelation rate for $X_t$ plays here s a crucial role.
\begin{mypro}\label{ThUnobservable}
Let $X_t \in \bR^r$ be a stationary process with finite $L^4$- norm $\nu = \|X_t \|^4 $, and with
lagged covariance matrices $K(u)$ locally Lipschitz in $u$. Assume that $X_t$ has integrable decorrelation 
rate $f(T)$ as in \eqref{decay4} and set $I(f) = \int_0^{+ \infty} f(T) dT $. 
Fix any adaptive sub-sampling scheme $\Delta(\eps), N(\eps)$ verifying \eqref{N Delta}. 
Let $\hat{K}^\eps_X(u)$ be the (unobservable) 
sub-sampled empirical estimators of $K(u)$ defined by \eqref{DefCovXep}. 
Then the following two statements hold. \\
(I) For time lags $u$ in any bounded interval $[0,A]$, 
the estimators $\hat{K}^\eps_X(u)$ are consistent in $L^2$ as $\eps \to 0$, with uniform speed of 
convergence given by
\begin{equation}\label{covboundX}
\| \hat K_X^\eps(u) - K(u) \|_2 \leq \gamma \left[ \frac{1}{\sqrt{N \Delta}} + \Delta \right],
\end{equation}
where the constant $\gamma$ depends only on $A, I(f),\nu$. \\
(II) As $\eps \to 0$, the (unobservable) estimators $\bar{X}^\eps $ given by \eqref{barX} converge in $L^4$ and in $L^2$ to the mean 
$\mu$ of $X_t$ with the speed of convergence given by 
\begin{equation}\label{L2L4speed}
\| \bar{X}^\eps - \mu \|_4 \leq \frac{c}{(N \Delta)^{1/4}} \;\; \text{and} \;\; \| \bar{X}^\eps - \mu \|_2 \leq \frac{c}{(N \Delta)^{1/2}},
\end{equation}
where the constant $c$ depends only on $I(f)$ and $\nu$.
\end{mypro}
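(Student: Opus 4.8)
The plan is to treat statement (II) first, since it is technically simpler and its arguments are reused for (I). For (II), I would write $\bar X^\eps - \mu = \frac{1}{N}\sum_{n=1}^N (X_{n\Delta} - \mu)$ and estimate $\|\bar X^\eps - \mu\|_4^4 = \bE\|\frac{1}{N}\sum_n (X_{n\Delta}-\mu)\|^4$ coordinatewise. Expanding the fourth power produces a sum over quadruples $(n_1,n_2,n_3,n_4)$ of terms $\bE[(X_{n_1\Delta}-\mu)(i)\cdots(X_{n_4\Delta}-\mu)(i)]$; the key is that by the integrable decorrelation hypothesis \eqref{decay4}, whenever one index is separated from the others by a gap $g$ (in units of $\Delta$), the corresponding expectation is bounded by $\nu^{3/4}\cdot 2 f(g\Delta)$ or similar (after splitting the quadruple into a block in $F_U$ and a single factor in $F_V$, using that products of two coordinates lie in $F_U$). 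Summing the geometric-type bounds and using $\sum_{k\geq 1} f(k\Delta) \leq \frac{1}{\Delta}\int_0^\infty f(T)\,dT = I(f)/\Delta$ (monotonicity of $f$), one finds the dominant contribution is of order $N^2 \cdot \nu \cdot (I(f)/\Delta)^2 / N^4 \sim \nu I(f)^2/(N\Delta)^2$, giving $\|\bar X^\eps - \mu\|_4 \leq c/(N\Delta)^{1/4}$; the $L^2$ bound follows either by the same (easier) second-moment computation or by Jensen once the constants are tracked carefully.

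For statement (I), I would first reduce to the centered process $X_t - \mu$ (legitimate, as noted after \eqref{DefCovXep}) and write $\hat K_X^\eps(u) = \frac{1}{N}\sum_{n=1}^N (X_{n\Delta}-\bar X^\eps)(X_{(n+\kappa)\Delta} - \tau_u\bar X^\eps)^*$. I would split this into the ``raw'' empirical moment $R^\eps(u) := \frac{1}{N}\sum_{n=1}^N X_{n\Delta}\, X_{(n+\kappa)\Delta}^*$ (for the centered process) minus the correction $\bar X^\eps (\tau_u \bar X^\eps)^*$. The correction term is controlled in $L^2$ by \eqref{norm.AB} together with part (II): each factor has $L^4$-norm $\leq c/(N\Delta)^{1/4} \to 0$, so $\|\bar X^\eps(\tau_u\bar X^\eps)^*\|_2 \leq c^2/(N\Delta)^{1/2}$, absorbed into the bound. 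For $R^\eps(u)$, I would write $R^\eps(u) - K(\kappa\Delta) = \frac{1}{N}\sum_n \big( X_{n\Delta}X_{(n+\kappa)\Delta}^* - K(\kappa\Delta)\big)$ and bound its $L^2$-norm by the standard variance-of-an-average estimate: $\|R^\eps(u)-K(\kappa\Delta)\|_2^2 \leq \frac{1}{N^2}\sum_{m,n}\big|\mathrm{Cov}(\text{entry}_m,\text{entry}_n)\big|$, where each entry is a product of two coordinates of $X$, hence lies in $F_U$ for a short interval $U$; the covariance of two such entries at sub-sampling-index gap $k$ is bounded (using \eqref{decay4} with the block-splitting trick, splitting a quadruple of coordinates into two pairs across the gap) by something like $C\nu\, f((k - \kappa)_+ \Delta)$ once $k$ exceeds $\kappa$, plus a uniform bound $C\nu$ for the $O(N\kappa)$ near-diagonal pairs. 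Summation gives $\|R^\eps(u)-K(\kappa\Delta)\|_2 \leq \gamma'(1/\sqrt{N\Delta})$ provided $\kappa/N \to 0$ (guaranteed by \eqref{N Delta}); note that absorbing the $O(\kappa)$ near-diagonal band requires $\kappa = O(N\Delta)$ type control, which the hypotheses supply since $\kappa \leq A/\Delta$ and we may enlarge $\gamma$ using $A$. Finally, the systematic error $\|K(\kappa\Delta) - K(u)\| \leq L_A |\kappa\Delta - u| \leq L_A \Delta/2$ by local Lipschitz continuity of $K$ on $[0,A]$ and \eqref{kappa}, contributing the $\Delta$ term; collecting the three pieces yields \eqref{covboundX} with $\gamma$ depending only on $A$, $I(f)$, $\nu$.

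The main obstacle is the bookkeeping in the fourth-moment expansion for part (II) and, in part (I), the covariance sum over index pairs: one must carefully partition quadruples of coordinate-indexed random variables into a ``clustered'' group and a ``separated'' factor so that the decorrelation inequality \eqref{decay4} applies with the right time gap, while simultaneously keeping track of how many quadruples fall into each regime and verifying that the near-diagonal band of width $O(\kappa)$ does not overwhelm the $1/(N\Delta)$ rate. A subtle point requiring care is that \eqref{decay4} applies to a single variable or a product of two variables against another such object over \emph{disjoint} intervals, so in the fourth-order expansion one may need to iterate the inequality (peel off one separated index, then re-split the remaining triple) rather than apply it once; organizing this cleanly — perhaps via a lemma stating that for $G_1,\ldots,G_4$ among the coordinate/product variables with consecutive gaps $g_1,g_2,g_3$, the centered joint moment is $\lesssim \nu \min_i f(g_i)$ — is where the real work lies, though each individual estimate is elementary.
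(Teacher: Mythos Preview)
Your plan matches the paper's proof almost step for step: the appendix centres $X_t$, proves (II) first by expanding the second and fourth powers of $\bar X^\eps(j)$ into sums over index tuples and bounding cross terms via \eqref{decay4} together with $\sum_{j\geq 1} f(j\Delta) \leq I(f)/\Delta$, and then for (I) writes $\hat K_X^\eps(u) = W_N - \bar X^\eps(\tau_u\bar X^\eps)^*$, controls the correction via (II) and \eqref{norm.AB}, splits the variance of $W_N - K(\kappa\Delta)$ into a near-diagonal band $|m-n|\leq\kappa$ (trivial $\nu^4$ bound on $O(N\kappa)$ terms) and a far part (bounded by decorrelation), and closes with the Lipschitz estimate $\|K(\kappa\Delta)-K(u)\|\leq\lambda\Delta$.

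One slip to fix in your (II) sketch: the dominant fourth-moment contribution is not of order $I(f)^2/(N\Delta)^2$ but of order $I(f)/(N\Delta)$, coming from the pair-versus-pair split $|E(U_aU_bU_mU_n)-E(U_aU_b)E(U_mU_n)|\leq f((m-b)\Delta)$, whose sum over $a<b<m<n$ is of order $N^3/\Delta$ since the outer indices $a$ and $n$ remain free. This is exactly what yields the stated rate $\|\bar X^\eps-\mu\|_4\leq c/(N\Delta)^{1/4}$; your claimed intermediate order $1/(N\Delta)^2$ would instead give $1/(N\Delta)^{1/2}$, so those two lines of your sketch are mutually inconsistent---keep the conclusion and correct the dominant-term identification.
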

\begin{proof} 
The proof relies on the meticulous use of natural techniques, and is hence given in the appendix \ref{ProofUnobservable}. 
\end{proof}
In proposition \ref{ThUnobservable}, our key technical target was to identify how $L^2$ speeds of convergence 
depend on the adaptive scheme $N(\eps)$, $\Delta(\eps)$, in order to optimize our adaptive subsampling schemes, 
first for our unobservable empirical estimators of lagged covariances (see next corollary), and further on for the 
corresponding observable covariances estimators.

\paragraph{Similarity Notation.} Next, we introduce similarity notation for the limiting behavior of any two functions 
depending on a small parameter $\eps$. In particular, for any two functions $G(\eps)$ and $H(\eps)$, we write $G \sim H$ whenever $ \lim_{\eps \to 0} \frac{G(\eps)} {H(\eps)}$ is finite and strictly positive.

The next proposition determines the optimal relationship between the number of observations $N(\eps)$ and the 
sub-sampling time-step $\Delta(\eps)$. Smaller values of $\Delta(\eps)$ will not improve the limiting convergence rate,
but will lead to oversampling and, thus, unnecessary data acquisition and storage.
\begin{mypro}\label{PropCorollary} Let $X_t \in \bR^r$ be a stationary process in $L^4$, with locally Lipschitz lagged 
covariances $K(u)$ and integrable decorrelation rate $f(T)$. Select arbitrary numbers of observations $N(\eps)$ 
tending to $\infty$ as $\eps \to 0$ and define the sub-sampling time steps by
\begin{equation} \label{optimalDelta}
\Delta(\eps) \sim \frac{1}{N(\eps)^{1/3}}.
\end{equation}
The (unobservable) estimators $\hat K_X^\eps(u) $ associated to $\Delta(\eps), N(\eps)$ converge to the 
true $K(u)$ as $\eps \to 0$, with the following $L^2$ speed, valid for all $0 \leq u \leq A$,
\begin{equation} \label{optimalspeed}
\| \hat K_X^\eps(u) - K(u) \|_2 \leq C \frac{1}{N(\eps )^{1/3}},
\end{equation}
where $C$ is a constant determined by $A, I(f), \nu$.
Given the function $N(\eps)$ , the $L^2$ bounds in \eqref{optimalspeed} define, up to multiplicative constants, the 
\textit{best $L^2$ speed of convergence } obtainable under the generic assumptions of proposition \ref{ThUnobservable}.
\end{mypro}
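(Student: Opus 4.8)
The plan is to treat this statement as an optimization corollary of Proposition~\ref{ThUnobservable}. First I would check admissibility: since $N(\eps)\to\infty$, the choice $\Delta(\eps)\sim N(\eps)^{-1/3}$ gives $\Delta(\eps)\to 0$ and $N(\eps)\Delta(\eps)\sim N(\eps)^{2/3}\to\infty$, so condition \eqref{N Delta} holds and Proposition~\ref{ThUnobservable} applies. Substituting $\Delta\sim N^{-1/3}$ into the bound \eqref{covboundX} gives $1/\sqrt{N\Delta}\sim 1/\sqrt{N^{2/3}}=N^{-1/3}$ and $\Delta\sim N^{-1/3}$, so the bracket $\tfrac{1}{\sqrt{N\Delta}}+\Delta$ is $\sim N^{-1/3}$; absorbing the (finite, strictly positive) similarity constants together with $\gamma$ into a constant $C=C(A,I(f),\nu)$ yields $\|\hat K_X^\eps(u)-K(u)\|_2\le C\,N(\eps)^{-1/3}$ uniformly for $0\le u\le A$, which is \eqref{optimalspeed}.

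For the optimality assertion I would proceed in two stages. \emph{Stage one} is pure calculus and shows that $\Delta\sim N^{-1/3}$ extremizes the family of bounds produced by Proposition~\ref{ThUnobservable}. For an arbitrary admissible scheme with the given $N(\eps)$, write the right-hand side of \eqref{covboundX} as $\gamma\,h(\Delta)$ with $h(\Delta)=(N\Delta)^{-1/2}+\Delta$. Splitting the first term into two equal halves and applying the arithmetic--geometric mean inequality to the three summands $\tfrac12(N\Delta)^{-1/2}$, $\tfrac12(N\Delta)^{-1/2}$, $\Delta$ yields $h(\Delta)\ge 3\bigl(\tfrac14 N^{-1}\bigr)^{1/3}=3\cdot 4^{-1/3}N^{-1/3}$, a lower bound independent of $\Delta$; moreover equality up to constants forces $(N\Delta)^{-1/2}\sim\Delta$, i.e.\ $\Delta\sim N^{-1/3}$. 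Hence no choice of $\Delta(\eps)$ can push the guaranteed $L^2$ bound below order $N^{-1/3}$, and $\Delta\sim N^{-1/3}$ attains that order: smaller $\Delta$ only inflates $1/\sqrt{N\Delta}$ (and the observational cost), larger $\Delta$ only inflates the discretization term.

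\emph{Stage two} would argue that this $N^{-1/3}$ rate is genuinely the best attainable under the stated generic hypotheses, and not merely an artifact of the estimate. The bound \eqref{covboundX} is the sum of a decorrelation-driven fluctuation term and a Lipschitz-driven bias term, and both are in general irreducible: the estimator \eqref{DefCovXep} targets $K(\kappa\Delta)$ rather than $K(u)$, so it carries a bias of order $|K(\kappa\Delta)-K(u)|$, which — with only local Lipschitz control of $K$ — may be of order $\Delta$; and for a process whose decorrelation genuinely decays at rate $\sim f$ the empirical lagged covariance has fluctuation of standard-deviation order $(N\Delta)^{-1/2}$, the effective number of asymptotically independent blocks among $X_{\Delta},\dots,X_{N\Delta}$ being of order $N\Delta$. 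A concrete witness satisfying all hypotheses of Proposition~\ref{ThUnobservable} is a scalar Ornstein--Uhlenbeck process, with $K(u)=\sigma^2e^{-\theta u}$ locally Lipschitz and, being Gaussian with exponentially decaying covariance, possessing an integrable decorrelation rate. For it one computes that the variance of $\hat K_X^\eps(u)$ is bounded below by a positive constant times $(N\Delta)^{-1}$, while choosing at each $\eps$ a lag $u\in[0,A]$ for which $|\kappa\Delta-u|$ is of maximal order $\Delta$ forces a bias of order $\Delta$; the centering corrections are of the lower order $(N\Delta)^{-1}$. Therefore $\sup_{0\le u\le A}\|\hat K_X^\eps(u)-K(u)\|_2$ is bounded below by a positive constant times $(N\Delta)^{-1/2}+\Delta$, hence by a positive constant times $N^{-1/3}$ via the Stage-one AM--GM bound, for \emph{every} admissible scheme with that $N(\eps)$. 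Matched with the upper bound \eqref{optimalspeed}, this proves optimality.

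I expect Stage two — the matching lower bound witnessing sharpness — to be the main obstacle: one must verify with care that, for a concrete process, the fluctuation and bias terms are simultaneously of the claimed orders and do not partially cancel (in particular controlling the low-$\Delta$ regime, where the centering corrections might otherwise swamp the bias). If instead one reads ``best speed obtainable under the generic assumptions of Proposition~\ref{ThUnobservable}'' in the narrower sense of ``best bound derivable from \eqref{covboundX}'', then Stage one alone suffices, and Stage two may be replaced by the remark that \eqref{covboundX} is already tight for generic $X_t$ since only Lipschitz control of $K$ is assumed, which cannot exclude a genuine bias of order $\Delta$ in the discretization of the lag.
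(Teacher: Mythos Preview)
Your proposal is correct and follows essentially the same approach as the paper: substitute $\Delta\sim N^{-1/3}$ into the bound \eqref{covboundX} to obtain \eqref{optimalspeed}, argue that this choice minimizes the bound function $h(\Delta)=(N\Delta)^{-1/2}+\Delta$ over $\Delta$ (the paper simply calls this minimization ``obvious'' where you supply the AM--GM argument), and then invoke a Gaussian witness---specifically Ornstein--Uhlenbeck---to show the bound is sharp, with the paper deferring the detailed OU computation to \cite{Azen1,Azen2} rather than sketching it as you do. Your explicit admissibility check and the bias/fluctuation discussion in Stage two are more detailed than the paper's version, but the structure and key ideas match.
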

\begin{proof} 
For $\| \hat K_X^\eps(u) - K(u) \|_2 $, proposition \ref{ThUnobservable} provides a bound proportional to 
$$
B(\eps) = \frac{1}{\sqrt{N \Delta}} + \Delta.
$$
For any given $N(\eps)$, the choice $\Delta(\eps) \sim N^{-1/3} (\eps)$ 
obviously minimizes $B(\eps)$, and one then has $B(\eps) \sim N^{-1/3}(\eps)$. 
This proves equation \eqref{optimalspeed}.

To check that the bound in equation \eqref{optimalspeed} cannot be improved further, one simply needs 
to construct a process $X_t$ verifying all the hypotheses of proposition \ref{ThUnobservable}, and such that 
there is an equivalence 
$$
\| \hat K_X^\eps(u) - K(u) \|_2 \sim \frac{1}{\sqrt{N \Delta}} + \Delta.
$$
Therefore, in this case the 
choice $\Delta \sim N ^{-1/3}$ implies the optimal sub-sampling strategy. 
Indeed, in dimension 1, given any continuous and piecewise $C^1$ positive definite function $K(u)$, 
such that $ | K(u) | \leq f(u) $ where $f(u)$ is continuous, decreasing, and integrable, there is a strictly stationary centered Gaussian process $X_t \in \bR$ with covariances $K(u)$ (see \cite{shepp72}). 
The 2nd order moments of $X_t$ must then have integrable decorrelation rate proportional to 
$f$, as seen above. For standard examples of stationary 1-dimensional Gaussian processes, the estimators 
$\hat K_X^\eps(u) $ do achieve $L^2$ -errors of estimation which are actually equivalent to 
$(N \Delta)^{-1/2} + \Delta$. 
See for instance \cite{Azen1,Azen2} where the case of the Ornstein-Uhlenbeck process is studied in detail.
\end{proof}

\section{Accuracy of Sub-sampled Moments Estimators under Indirect Observability}
\label{sec:indirect}
We now study $L^2$-consistency for \textit{observable} sub-sampled empirical estimators of lagged covariance matrices. The $L^4$-speed $\rho(\eps)$ at which $Y_t^\eps$ approximates $X_t$ is of course application specific. 
One of our goals was to analyze how $\rho(\eps)$ determines optimal sub-sampling schemes for collecting observable data. This is achieved in the following result. 
\begin{mytho} \label{ThOptimalScheme}
Let $X_t = \lim_{\eps \to 0} Y^\eps_t $ be an indirect observability setup in $ \bR^r$ as in section \ref{IOS}. 
Call $\mu$ and $K(u)$ the mean and lagged covariance matrices of $X_t$, respectively.
Let $\rho(\eps)$ be the $L^4$-speed at which $Y_t^\eps$ approximates $X_t$ for all $t$. 
Let $\nu$ be an upper bound of all the $ \| X_t \|_4 $ and $\| Y^\eps_t \|_4$.
Assume that $X_t$ has integrable decorrelation rate $f(T)$.

For each adaptive sub-sampling scheme $\Delta(\eps), N(\eps)$ verifying $\Delta(\eps) \to 0$ and 
$N(\eps) \Delta(\eps) \to \infty$ as $\eps \to 0$, formula \eqref{DefCovYep} defines observable sub-sampled 
estimators $\hat K_Y^\eps(u) $ of $K(u)$. For each $u$, let $\mathcal{E}(u)$ be the set of all 
such observable estimators.

Then among all the observable estimators $\hat K_Y^\eps(u) $ in $\mathcal{E}(u)$, the \textit{best achievable} 
$L^2$-speed of convergence to the true $K(u)$ as $\eps \to 0$ is equivalent to some constant multiple of $\rho(\eps)$ 
and is achieved by any sub-sampling scheme of the form 
\begin{equation} \label{optimizedScheme}
N(\eps) \sim \frac{1}{\rho(\eps)^3}, \qquad
\Delta(\eps) \sim \frac{1}{N(\eps)^{1/3}} \sim \rho(\eps).
\end{equation}
For any such sub-sampling scheme and any fixed A, one indeed has 
\begin{eqnarray}
\| \hat K_Y^\eps(u) - K(u) \|_2 &\leq& C \rho(\eps) \quad \text{for all~~} 0 \leq u \leq A , \label{optimEq1} \\
\| \bar{Y}^\eps - \mu \|_2 &\leq& C \rho(\eps),
\label{optimEq2}
\end{eqnarray}
for some constant $C$ determined by $A, \nu$ and $I(f) = \int_1^{+ \infty} f(T) dT$.

Let $S(\eps) \sim N(\eps) \Delta(\eps)$ be the \textit{observational time}span gathering the time 
indexes $n \Delta$ of all the observables $Y^\eps_{n \Delta}$ involved in the estimator $\hat K_Y^\eps(u)$. 
Each optimized sub-sampling scheme of the form \eqref{optimizedScheme} also minimizes the rate at which 
$S(\eps) \to \infty$ as $\eps \to 0$.
\end{mytho}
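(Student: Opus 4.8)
The plan is to combine the two earlier building blocks: Proposition~\ref{ThCompareYX}, which bounds the $L^2$-distance between observable and unobservable covariance estimators by $4\nu\rho(\eps)$, and Proposition~\ref{ThUnobservable}, which bounds the $L^2$-error of the unobservable estimators by $\gamma\bigl[(N\Delta)^{-1/2}+\Delta\bigr]$. By the triangle inequality,
\begin{equation}\label{planTriangle}
\| \hat K_Y^\eps(u) - K(u) \|_2 \;\leq\; \| \hat K_Y^\eps(u) - \hat K_X^\eps(u) \|_2 + \| \hat K_X^\eps(u) - K(u) \|_2 \;\leq\; 4\nu\rho(\eps) + \gamma\Bigl[\tfrac{1}{\sqrt{N\Delta}} + \Delta\Bigr],
\end{equation}
and similarly $\|\bar Y^\eps-\mu\|_2 \leq \|\bar Y^\eps-\bar X^\eps\|_2 + \|\bar X^\eps-\mu\|_2 \leq \rho(\eps) + c(N\Delta)^{-1/2}$, using that the $L^2$-norm is dominated by the $L^4$-norm. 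So the total observable error is controlled by $\rho(\eps) + B(\eps)$ where $B(\eps) = (N\Delta)^{-1/2}+\Delta$ is exactly the quantity minimized in Proposition~\ref{PropCorollary}.

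Next I would carry out the optimization over sub-sampling schemes. For a fixed $N$, Proposition~\ref{PropCorollary} already tells us $B(\eps)$ is minimized by $\Delta\sim N^{-1/3}$, giving $B(\eps)\sim N^{-1/3}$; the remaining free parameter is $N(\eps)$. Since $\rho(\eps)$ is fixed by the application, \eqref{planTriangle} shows the observable error is $O(\rho(\eps) + N^{-1/3})$, and there is no point making $N^{-1/3}$ smaller than $\rho(\eps)$ — once $N^{-1/3}\lesssim\rho(\eps)$, i.e. $N\gtrsim\rho(\eps)^{-3}$, the $\rho(\eps)$ term dominates and further increasing $N$ cannot improve the rate. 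The smallest such $N$ is $N(\eps)\sim\rho(\eps)^{-3}$, which then forces $\Delta(\eps)\sim N^{-1/3}\sim\rho(\eps)$. Substituting back into \eqref{planTriangle} gives $\|\hat K_Y^\eps(u)-K(u)\|_2\leq C\rho(\eps)$ uniformly on $[0,A]$ and $\|\bar Y^\eps-\mu\|_2\leq C\rho(\eps)$, proving \eqref{optimEq1}–\eqref{optimEq2}, with $C$ absorbing $4\nu$, $\gamma$, $c$ (hence depending only on $A$, $\nu$, $I(f)$). The conditions $\Delta(\eps)\to0$ and $N(\eps)\Delta(\eps)\sim\rho(\eps)^{-2}\to\infty$ are automatically satisfied since $\rho(\eps)\to0$.

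For optimality of the rate, I would invoke the lower-bound construction already supplied in the proof of Proposition~\ref{PropCorollary}: there exists a stationary (Gaussian) process $X_t$ satisfying all hypotheses for which $\|\hat K_X^\eps(u)-K(u)\|_2\sim B(\eps)$. Taking $Y^\eps_t = X_t$ (so $\rho(\eps)$ may be taken arbitrarily small, or taking any genuine approximation with that $\rho$) shows no scheme can beat the $B(\eps)\gtrsim N^{-1/3}$ barrier, hence no scheme can drive the observable $L^2$-error below a constant multiple of $\rho(\eps)$ once we also account for the irreducible $\rho(\eps)$ term from Proposition~\ref{ThCompareYX}; so $\rho(\eps)$ is indeed the best achievable $L^2$-speed. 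Finally, for the observational timespan $S(\eps)\sim N(\eps)\Delta(\eps)$: any scheme achieving the optimal rate $C\rho(\eps)$ must have $N^{-1/3}\lesssim\rho(\eps)$ and $\Delta\lesssim\rho(\eps)$ (otherwise the $\Delta$ term in $B$ alone exceeds a constant multiple of $\rho(\eps)$ — here I should check the edge case where one term is small and the other large, but since $B=\,(N\Delta)^{-1/2}+\Delta\geq\Delta$ and also, when $\Delta\ll\rho$, one needs $(N\Delta)^{-1/2}\lesssim\rho$ forcing $N\Delta\gtrsim\rho^{-2}$). In every case $S(\eps)=N\Delta\gtrsim\rho(\eps)^{-2}$, and the scheme \eqref{optimizedScheme} attains $S(\eps)\sim\rho(\eps)^{-2}$, so it minimizes the growth rate of $S(\eps)$.

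**The main obstacle** I expect is the optimality/minimality part rather than the upper bound: one has to argue carefully that \emph{every} admissible scheme attaining the optimal rate must satisfy $N\Delta\gtrsim\rho(\eps)^{-2}$, which requires handling the two regimes ($\Delta$ small vs.\ $\Delta$ comparable to $\rho$) and relies on the sharpness of the bound $B(\eps)$ from the Gaussian example in Proposition~\ref{PropCorollary}; the upper-bound half is a routine triangle-inequality-plus-AM-GM computation.
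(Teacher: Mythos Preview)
Your upper-bound argument is correct and essentially matches the paper's: the paper routes through an intermediate Proposition~\ref{ThObservable} (which simply packages your triangle inequality \eqref{planTriangle} after substituting $\Delta\sim N^{-1/3}$), then performs the same balancing $N^{-1/3}\sim\rho(\eps)$ to get \eqref{optimEq1}--\eqref{optimEq2}. Your derivation is actually slightly cleaner since you work directly from Propositions~\ref{ThCompareYX} and~\ref{ThUnobservable}.

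The gap is in your optimality argument. You write that ``no scheme can drive the observable $L^2$-error below a constant multiple of $\rho(\eps)$ once we also account for the irreducible $\rho(\eps)$ term from Proposition~\ref{ThCompareYX}.'' But Proposition~\ref{ThCompareYX} is an \emph{upper} bound on $\|\hat K_Y^\eps-\hat K_X^\eps\|_2$; it gives no lower bound and hence cannot establish that the $\rho(\eps)$ contribution is irreducible. Likewise, taking $Y^\eps_t=X_t$ gives $\rho(\eps)=0$, which says nothing about a positive $\rho$ floor. The Gaussian example from Proposition~\ref{PropCorollary} only certifies sharpness of the \emph{unobservable} bound $(N\Delta)^{-1/2}+\Delta$; it does not by itself show that the observable error is bounded below by a multiple of $\rho(\eps)$.

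The paper closes this gap by exhibiting a concrete indirect-observability setup: with $X_t$ a one-dimensional centered Gaussian process (covariance decaying at an integrable rate) and $Y^\eps_t=(1+\rho(\eps))X_t$, one has $\|Y^\eps_t-X_t\|_4=C\rho(\eps)$ and all hypotheses hold; since $Y^\eps$ is itself Gaussian with covariance $(1+\rho(\eps))^2 K_X(u)$, one can compute $h(\eps)^2=\bE[(\hat K_Y^\eps(u)-K(u))^2]$ explicitly and verify $\liminf_{\eps\to0} h(\eps)/\rho(\eps)>0$ for \emph{every} admissible scheme. That is the missing lower-bound ingredient: you need at least one setup where the observable error genuinely cannot beat $\rho(\eps)$, and an explicit multiplicative perturbation of a Gaussian $X_t$ is the simplest way to produce it. Your timespan-minimality discussion, incidentally, also leans on this sharpness and would be completed by the same example.
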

%%%
Theorem \ref{ThOptimalScheme} will be proved right after the following more technical proposition.
%%%
\begin{mypro}\label{ThObservable}
Let $X_t = \lim_{\eps \to 0} Y^\eps_t $ be an indirect observability setup in $ \bR^r$ verifying all the assumptions of 
Theorem \ref{ThOptimalScheme}. 
Let $\rho(\eps)$ be the $L^4$-speed at which $Y_t^\eps$ approximates $X_t$ for all $t$. 
Call $\mu$ and $K(u)$ the mean and lagged covariance matrices of $X_t$.
Moreover, fix any adaptive sub-sampling scheme $\Delta(\eps), N(\eps)$ such that 
$\lim_{\eps \to 0} N(\eps) = \infty$ and 
$\Delta(\eps) \sim N^{-1/3}(\eps)$.

By formulas \eqref{DefCovYep} and \eqref{barY}, this sub-sampling scheme defines \textit{observable} estimators 
$\hat{K}^\eps_Y(u) $ and $\bar{Y}^\eps$ of $K(u)$ and $\mu$, respectively. 
Then all these estimators are $L^2$-consistent as $\eps \to 0$, 
and for any fixed $A >0$ one has the uniform $L^2$-speeds of convergence 
\begin{eqnarray}
\| \hat K_Y^\eps(u) - K(u) \|_2 &\leq& 4 \nu \rho(\eps) + \frac{c}{N^{2/3}(\eps)}\quad \text{for all} \quad 0 \leq u \leq A, \label{ThObsEq1} \\
\| \bar{Y}^\eps - \mu \|_2 &\leq& \rho(\eps) + \frac{c}{N^{2/3}(\eps)},
\label{ThObsEq2}
\end{eqnarray}
where $c$ is a constant determined by $A, \nu, I(f)$.
\end{mypro}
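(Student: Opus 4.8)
The plan is to combine the two ingredients already established: the sensitivity bound of Proposition \ref{ThCompareYX}, which compares observable estimators to their unobservable versions, and the consistency bound of Proposition \ref{ThUnobservable}, which controls the unobservable estimators against the true moments. First I would write the elementary triangle inequality
\[
\| \hat K_Y^\eps(u) - K(u) \|_2 \leq \| \hat K_Y^\eps(u) - \hat K_X^\eps(u) \|_2 + \| \hat K_X^\eps(u) - K(u) \|_2 ,
\]
and similarly $\| \bar Y^\eps - \mu \|_2 \leq \| \bar Y^\eps - \bar X^\eps \|_2 + \| \bar X^\eps - \mu \|_2$. The first term in each right-hand side is handled directly by Proposition \ref{ThCompareYX}: estimate \eqref{CompKXKY} gives $\| \hat K_Y^\eps(u) - \hat K_X^\eps(u) \|_2 \leq 4\nu\rho(\eps)$ uniformly in $u \geq 0$, and \eqref{CompbarXbarY} together with the monotonicity of $L^p$ norms gives $\| \bar Y^\eps - \bar X^\eps \|_2 \leq \| \bar Y^\eps - \bar X^\eps \|_4 \leq \rho(\eps)$.

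Next I would bound the unobservable terms using Proposition \ref{ThUnobservable}. Statement (I) of that proposition gives $\| \hat K_X^\eps(u) - K(u) \|_2 \leq \gamma\big[ (N\Delta)^{-1/2} + \Delta \big]$ for $0 \leq u \leq A$, with $\gamma$ depending only on $A, I(f), \nu$, and statement (II) gives $\| \bar X^\eps - \mu \|_2 \leq c\,(N\Delta)^{-1/2}$. Now I substitute the hypothesis $\Delta(\eps) \sim N^{-1/3}(\eps)$: then $N\Delta \sim N^{2/3}$, so $(N\Delta)^{-1/2} \sim N^{-1/3}$ and $\Delta \sim N^{-1/3}$, whence $(N\Delta)^{-1/2} + \Delta \sim N^{-1/3}$; but more is true, since squaring is not needed here — I simply note $(N\Delta)^{-1/2} + \Delta \leq c' N^{-1/3}(\eps)$ for small $\eps$. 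Wait: the stated bound in \eqref{ThObsEq1} is $c/N^{2/3}$, not $c/N^{1/3}$, so I must be more careful. Since $\Delta \sim N^{-1/3}$ means $\Delta^2 \sim N^{-2/3}$ and $(N\Delta)^{-1/2} = N^{-1/2}\Delta^{-1/2} \sim N^{-1/2}N^{1/6} = N^{-1/3}$ — that still gives $N^{-1/3}$. The resolution must be that the bound \eqref{covboundX} in Proposition \ref{ThUnobservable} is actually sharper than displayed, or that here one squares: indeed, for the final Theorem one wants $\rho(\eps) \sim \Delta(\eps) \sim N^{-1/3}$, so $N^{-2/3} \sim \rho(\eps)^2 = o(\rho(\eps))$, and the claimed $c/N^{2/3}$ would be the correct lower-order term only if the unobservable bound were $O((N\Delta)^{-1} + \Delta^2)$. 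I would therefore revisit: the quantity of interest may be $\|\cdot\|_2^2$, or the bound \eqref{covboundX} should be read as furnishing, after optimization, an error of order $(N\Delta)^{-1/2}+\Delta \sim N^{-1/3}$, and \eqref{ThObsEq1}'s second term $c/N^{2/3}$ must come from a tighter analysis applicable when $\Delta \sim N^{-1/3}$ exactly — plausibly because in that regime the bias term $\Delta$ and the variance term $(N\Delta)^{-1/2}$ are of the same order $N^{-1/3}$ and their \emph{sum of squares} controls the squared $L^2$ error by $\asymp N^{-2/3}$; but since we report $\|\cdot\|_2$ this cannot be. The cleanest route is to take the conclusion of Proposition \ref{ThUnobservable} at face value, plug in $\Delta \sim N^{-1/3}$ to get $\| \hat K_X^\eps(u) - K(u)\|_2 \leq c\,N^{-1/3}$, and then simply observe $N^{-1/3} \geq N^{-2/3}$ for large $N$ — so if \eqref{ThObsEq1} genuinely has $N^{-2/3}$ it is a \emph{stronger} claim and I would need the refined version of \eqref{covboundX}.

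The main obstacle, then, is reconciling the exponent in the residual term $c/N^{2/3}(\eps)$ with what Proposition \ref{ThUnobservable} as stated delivers; I expect that the intended reading is that the unobservable error under $\Delta \sim N^{-1/3}$ is itself $O(N^{-1/3})$ but that this is $o(\rho(\eps))$ in the optimal scheme of Theorem \ref{ThOptimalScheme}, so the decomposition $\| \hat K_Y^\eps(u) - K(u)\|_2 \leq 4\nu\rho(\eps) + O(N^{-1/3})$ is what matters — and the displayed $N^{-2/3}$ is either a typo for $N^{-1/3}$ or follows from squaring conventions. Modulo that bookkeeping, the proof is immediate: assemble the triangle inequalities, invoke \eqref{CompKXKY}, \eqref{CompbarXbarY}, \eqref{covboundX}, \eqref{L2L4speed}, substitute $N\Delta \sim N^{2/3}$, absorb all structural constants into a single $c$ depending on $A, \nu, I(f)$, and read off \eqref{ThObsEq1}–\eqref{ThObsEq2}. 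No new estimates are required beyond what the two cited propositions already provide.
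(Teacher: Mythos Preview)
Your approach is exactly the paper's: the proof there is two sentences, simply citing the triangle inequality together with Propositions \ref{ThCompareYX} and \ref{ThUnobservable}. Your suspicion about the exponent is well founded and is not a gap in your argument but a typo in the statement: substituting $\Delta \sim N^{-1/3}$ into \eqref{covboundX} and \eqref{L2L4speed} yields a residual of order $N^{-1/3}$, not $N^{-2/3}$, and indeed the paper's own proof of Theorem \ref{ThOptimalScheme} immediately afterward reads off ``the bound $B(\eps)= 4 \nu \rho + c N^{-1/3}$ in equation \eqref{ThObsEq1}'', confirming that $N^{-1/3}$ is what was intended.
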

\begin{proof}
The bound in \eqref{ThObsEq1} is a direct consequence of the bounds obtained for 
$\| \hat K_Y^\eps(u) - \hat K_X^\eps(u)\|_2$ and $\| \hat K_X^\eps(u) - K(u) \|_2$ in propositions \ref{ThCompareYX} and \ref{ThUnobservable}.
Similar arguments prove the bound in \eqref{ThObsEq2}.
\end{proof}
We can now come back to proving theorem \ref{ThOptimalScheme}.
\paragraph{Proof of theorem \ref{ThOptimalScheme}.}
Taking $\Delta \sim N^{-1/3}$, and hence $S \sim N^{2/3}$, we apply proposition \ref{ThObservable}. The bound $B(\eps)= 4 \nu \rho + c N^{-1/3}$ in equation \eqref{ThObsEq1} is larger than $4 \nu \rho$, so that to minimize $B(\eps)$ up to multiplicative constants there is no asymptotic advantage in taking $N^{-1/3} \ll \rho$. To simultaneously minimize (up to multiplicative constants) both $S \sim N^{2/3}$ and $B(\eps)$, a natural choice is then to take $N^{-1/3} \sim \rho(\eps)$ which yields $N \sim \rho^{-3} $ and hence $\Delta \sim \rho$. Then, $B(\eps)$ is inferior to $(4 \nu +c) \rho$ which proves the equation \eqref{optimEq1}. The bound on $ \| \bar{Y}^\eps - \mu \|_2$ provided by \eqref{ThObsEq2} is then equal to $(1+c) \rho$ which proves the equation \eqref{optimEq2}.

To show that the $L^2$-speeds of convergence in \eqref{optimEq1} and \eqref{optimEq2} cannot be generically improved for observable sub-sampled covariance matrix estimators in the class $\mathcal{E}(u)$, consider a 1D centered Gaussian process $X_t$ with preassigned covariance function 
$K_X(u)$
assumed to be piecewise $C^1$ and to decay at an integrable rate $f(u)$ as $u \to \infty$. 
Next, define $Y^\eps_t = X_t + \rho(\eps) X_t$ where $\rho(\eps)$ is any function such that 
$\lim_{\eps \to 0} \rho(\eps) = 0 $. Then $\| Y^\eps_t - X_t \|_4 = C \rho(\eps)$ and all the hypotheses of proposition 
\ref{ThObservable} are satisfied. 
Moreover $Y^\eps_t$ is a centered Gaussian process with lagged covariances $K_X(u) (1 + 2\rho(\eps) + \rho^2(\eps))$.
Then, for any adaptive sub-sampling scheme $\Delta(\eps), N(\eps)$ the norm 
$h(\eps)^2 = \bE [( \hat K_Y^\eps(u) - K(u) )^2]$ can be explicitly computed in terms of $N, \Delta, \rho$ and
moments of $X_t$, and it can be checked that one always has 
$\liminf_{\eps \to 0} {h(\eps)}/{\rho(\eps)} > 0 $. Since we already know that the optimized explicit sub-sampling scheme 
\eqref{optimizedScheme} does yield $h(\eps) \sim \rho(\eps)$, this class of specific Gaussian examples proves generic optimality for the announced speed of convergence.

Previous results presented in this section have the following direct consequence for sparsely parametrized stationary processes. 
\begin{mytho} \label{ThParamEst}
Let $X_t = \lim_{\eps \to 0} Y^\eps_t $ be an indirect observability setup in $ \bR^r$ verifying all the assumptions of Theorem \ref{ThOptimalScheme}. Let $\rho(\eps)$ be the $L^4$-speed at which $Y_t^\eps$ approximates $X_t$ for all $t$. Assume also that $X_t$ is \emph{sparsely parametrized} by $\btheta \in \Theta \subset \bR^p$, as in definition \ref{sparse.def}. \\
Then there exist observable estimators $\hat{\btheta}^\eps$ converging in probability to $\btheta$ as $\eps \to 0$. After selecting any  sub-sampling scheme $N(\eps), \Delta(\eps)$ of the form  \eqref{optimizedScheme}, one can  construct these estimators by an expression of the form 
\begin{equation}
\label{thetahat}
\hat{\btheta}^\eps = G(\hat{\Psi}^{\eps}) \;\; \text{with}\;\; \hat{\Psi}^{\eps} = \left[ \hat{\Psi}_1^\eps, \ldots, \hat{\Psi}_p^\eps \right], 
\end{equation}
 where $G : \bR^p \to \bR^p$ is a fixed smooth function  and each $\hat{\Psi}_j^\eps$ is an observable  sub-sampled empirical estimator of the lagged 2nd order moment $\Psi_j$ of $X_t$.
Moreover, if $\Theta$ is included in some  known euclidean ball $\Lambda $ of finite radius, the truncated observable estimators $1_{\Lambda}(\hat{\btheta}^\eps ) \hat{\btheta}^\eps$ then converge in  $L^2$ norm  to $\btheta$ as $\eps \to 0$, with $L^2$ speed of convergence faster than $C \rho(\eps)$, for some constant $C$.
\end{mytho}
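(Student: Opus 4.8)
The plan is to combine the $L^2$-consistency of the observable lagged-moment estimators (Theorem \ref{ThOptimalScheme}) with the smoothness of the sparse parametrization map $G$. First I would unwind the definition: since $X_t$ is sparsely parametrized, fix the vector $\Psi = [\Psi_1,\dots,\Psi_p]$ of lagged 2nd order moments and the smooth map $\btheta = G(\Psi)$ given by Definition \ref{sparse.def}. Each $\Psi_j$ is either a coordinate of $\mu$, a coordinate of $K(u_j)$ for some lag $u_j$, or a coordinate of a second moment $\bE(X_t X_{t+u_j}^*)$; in every case formulas \eqref{barY} and \eqref{DefCovYep} furnish an observable sub-sampled empirical estimator $\hat\Psi_j^\eps$, and I collect them into $\hat\Psi^\eps = [\hat\Psi_1^\eps,\dots,\hat\Psi_p^\eps]$. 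Then set $\hat{\btheta}^\eps = G(\hat\Psi^\eps)$ as in \eqref{thetahat}. Fixing a sub-sampling scheme of the form \eqref{optimizedScheme}, Theorem \ref{ThOptimalScheme} gives $\|\hat\Psi_j^\eps - \Psi_j\|_2 \le C\rho(\eps)$ for each $j$ (using \eqref{optimEq1}--\eqref{optimEq2} and noting that an entry of $\bE(X_tX_{t+u}^*)$ equals the corresponding entry of $K(u)$ plus a product of entries of $\mu$, whose empirical analogue $\bar Y^\eps(\tau_u\bar Y^\eps)^*$ is $L^2$-close to $\mu\mu^*$ by \eqref{barYY} and part (II) of Proposition \ref{ThUnobservable}). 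Hence $\hat\Psi^\eps \to \Psi$ in $L^2$, a fortiori in probability.

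Next I would pass from convergence of $\hat\Psi^\eps$ to convergence of $\hat{\btheta}^\eps = G(\hat\Psi^\eps)$. Since $G$ is continuous at $\Psi$, the continuous mapping theorem gives $\hat{\btheta}^\eps \to G(\Psi) = \btheta$ in probability, which establishes the first assertion. For the $L^2$ statement under the extra hypothesis that $\Theta \subset \Lambda$ for a known euclidean ball $\Lambda$ of finite radius, I would argue as follows. The truncated estimator $1_\Lambda(\hat{\btheta}^\eps)\hat{\btheta}^\eps$ is uniformly bounded (its norm is at most the radius of $\Lambda$), so the family $\{\,|1_\Lambda(\hat{\btheta}^\eps)\hat{\btheta}^\eps - \btheta|^2\,\}_\eps$ is bounded, hence uniformly integrable; combined with convergence in probability this yields $L^2$ convergence. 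To get the quantitative rate $C\rho(\eps)$, I would exploit that $G$, being smooth, is Lipschitz on a fixed compact neighborhood $\mathcal{N}$ of $\Psi$: on the event $\{\hat\Psi^\eps \in \mathcal{N}\}$ one has $|\hat{\btheta}^\eps - \btheta| \le L\,|\hat\Psi^\eps - \Psi|$, while on the complementary event the truncated estimator contributes at most a bounded constant times $\mathbb{P}(\hat\Psi^\eps \notin \mathcal{N})^{1/2}$, and the latter probability is controlled by Chebyshev as $O(\rho(\eps)^2/\mathrm{dist}(\Psi,\partial\mathcal{N})^2)$, hence negligible relative to $\rho(\eps)$. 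Splitting $\bE[|1_\Lambda(\hat{\btheta}^\eps)\hat{\btheta}^\eps - \btheta|^2]$ over these two events and taking square roots gives the bound $\|1_\Lambda(\hat{\btheta}^\eps)\hat{\btheta}^\eps - \btheta\|_2 \le C\rho(\eps)$.

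The main obstacle is the final quantitative $L^2$ estimate: controlling the bad event where $\hat\Psi^\eps$ falls outside the neighborhood on which $G$ is Lipschitz, and verifying that the truncation by $1_\Lambda$ indeed tames the tail contribution without destroying the $\rho(\eps)$ rate. One must be careful that the Lipschitz constant $L$ and the neighborhood $\mathcal{N}$ depend only on $\btheta$ (equivalently on $\Theta$ and $G$) and not on $\eps$, so that $C$ in the conclusion is a genuine constant; this is where local smoothness of $G$ together with $\Theta$ being contained in the fixed ball $\Lambda$ is essential. The remaining steps --- identifying each $\Psi_j$ with an observable empirical covariance entry and invoking Theorem \ref{ThOptimalScheme} --- are routine bookkeeping once the sub-sampling scheme \eqref{optimizedScheme} is fixed.
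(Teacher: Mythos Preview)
Your argument follows the same skeleton as the paper's proof: unwind Definition~\ref{sparse.def}, invoke Theorem~\ref{ThOptimalScheme} to get $\hat\Psi^\eps \to \Psi$ in $L^2$ (hence in probability), and apply the continuous mapping theorem to obtain $\hat{\btheta}^\eps \to \btheta$ in probability. For this first assertion your proposal and the paper's proof are essentially identical.

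Where you diverge is in the treatment of the quantitative $L^2$ rate for the truncated estimator. The paper's proof simply invokes the general fact that for uniformly bounded random vectors, convergence in probability implies convergence in $L^2$; this yields qualitative $L^2$ convergence but does \emph{not} by itself deliver the announced rate $C\rho(\eps)$. Your good-event/bad-event decomposition, with Lipschitz control of $G$ on a fixed neighborhood $\mathcal{N}$ of $\Psi$ and a Chebyshev bound $P(\hat\Psi^\eps \notin \mathcal{N}) = O(\rho(\eps)^2)$ on the complement, actually supplies the missing rate argument. In that sense your proposal is more complete than the paper's own proof on this point.

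One small point worth making explicit in your write-up: on the good event $\{\hat\Psi^\eps \in \mathcal{N}\}$ you implicitly use that the truncation $1_\Lambda(\hat{\btheta}^\eps)\hat{\btheta}^\eps$ equals $\hat{\btheta}^\eps$, i.e.\ that $G(\hat\Psi^\eps) \in \Lambda$. This is justified because $\Theta$ is open (Section~\ref{BIOH}) and $\btheta \in \Theta \subset \Lambda$, so $\btheta$ lies in the interior of $\Lambda$; hence by continuity of $G$ you may choose $\mathcal{N}$ small enough that $G(\mathcal{N}) \subset \Lambda$. With that clarification your splitting argument goes through cleanly and yields the $C\rho(\eps)$ bound.
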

\begin{proof}
Definition \ref{sparse.def}  states that $\btheta = G(\Psi)$ where the vector $\Psi = \Psi_1, \ldots, \Psi_p $ involves  $p$ lagged moments of order  $\leq 2$ of the unobservable process $X_t$ and $G$ is a smooth function. Select  an optimized adaptive sub-sampling scheme of the form  \eqref{optimizedScheme}, and let  $\hat{\Psi}_j^\eps$ be the  associated observable  sub-sampled empirical estimator of the lagged 2nd order moment $\Psi_j$.  By theorem \ref{ThOptimalScheme}, as $\eps \to 0$, the estimators $\hat{\Psi}_j^\eps$ converge to $\Psi_j$ in $L^2$, and hence also converge to  $\Psi_j$ in probability. Since convergence in probability is preserved by smooth functions, the estimators  $\hat{\btheta}^\eps$ by equation \eqref{thetahat} must then converge in probability to $\btheta$ as $\eps$ to zero.
For uniformly bounded random vectors, convergence in probability always implies convergence in $L^2$, and this proves the last statement of the theorem.   
\end{proof}

\section{Applications to indirectly observable multi-dimensional diffusions }
\label{sec:appl}
We now present several examples of stationary multi-dimensional diffusions $X_t$ naturally embedded in indirect observability frameworks. A key assumption to prove $L^2$-consistency for natural observable estimators of the lagged covariances $K(u)$ of $X_t$ is to require the lagged moments of order $\leq 4$ of $X_t$ to decay at  integrable decorrelation rate for large lags,  as specified in equation \eqref{decay4}.

Published literature does not provide easy generic conditions on SDEs coefficients guaranteeing that the associated diffusion $X_t$ has integrable decorrelation rate as specified in \eqref{decay4}. Quite relevant exponential decay bounds for the transition density 
$p_{\btheta}(t,x,y) $ as $t \to \infty$ have been given in \cite{aron, pe,ns,davies}, but more precise bounds on $p_{\btheta}$ are needed to generically validate the integrable decorrelation rates on lagged moments of order 4 as required by equation \eqref{decay4}.
Here we do not attempt to solve these technical questions for general classes of diffusions. Instead, we will simply list a few interesting examples of diffusions for which our assumptions can either be directly verified, or are quite plausibly conjectured to be true, as can be also tested by numerical simulations.

\paragraph{Gradient Diffusions.} In section 7.1 of \cite{Hairer10}, M. Hairer discusses the \lq\lq{}gradient diffusions \rq\rq{} $X_t$ in $\bR^r$ driven by the SDE 
$$
dX_t = - \nabla Q(X_t) dt + \sigma dW_t,
$$
where $Q(x)$ is a smooth \lq\lq{}potential\rq\rq{} defined for $x \in R^r$, and $\sigma$ is a constant $r \times r$ matrix. 
The potential $Q$ is also assumed to behave as a polynomial at infinity, i.e. there are constants $c, \, C, \, k$ such that 
\[
c|x|^{2k} \le Q(x) \le C|x|^{2k}, \quad
\langle x, \nabla Q(x) \rangle \ge c|x|^{2k}, \quad
\left| D^2 Q(x) \right| \le C|x|^{2k-2},
\]
where $\nabla$ is the gradient operator and $D$ denotes 1st order differentiation operators.
Under these conditions, \cite{Hairer10} proves that the probability distribution of $X_t$ given $X_0 = x$ converges to the stationary probability distribution of $X_t$ at exponentially fast speed as $t \to \infty$, and that $X_t$ verifies the classical Doeblin property. Results of \cite{Hairer10} imply the exponentially fast decorrelation 
$$
| E(G H) - E(G) E(H) | \leq e^{-\gamma T} ,
$$
where $\gamma >0$ is a constant, but only for random variables $G, H$ of the form $G= f(X_s) \phi(X_t) $ and 
$H= F(X_u) \Psi(X_v)$ with $f,\phi, F, \Psi $ bounded and $s \leq t < t+T \leq u \leq v$. Whenever the Aronson bounds \cite{aron} on transition densities hold, this decorrelation inequality can be extended to $G = X_s X_t $ and $H = X_u X_v $ so that the \lq\lq{}gradient diffusions\rq\rq{} $X_t$ provide a class of stationary diffusions with integrable decorrelation rate $f(T) = e^{- \gamma T}$ and finite moments of order 4.
Whenever a given multidimensional diffusion process $X_t$ verifies all our assumptions on the unobservable process, the simplest example of observable processes $Y^{\eps }_t$ associated to $X_t$ is generated by local smoothing , i.e. 
$$
Y_t^\eps = \frac{1}{\eps} \int\limits_{t-\eps}^t X_s ds.
$$
Such observable processes are often generated by sensors recording short-term averages of high frequency input data. 
See \cite{Azen1} for a detailed study of this case when $X_t$ is a Gaussian diffusion.

\paragraph{Volatility Processes and Heston joint SDEs.} A striking example of indirect observability is quite ubiquitous in stochastic modeling of joint price and volatility for stockmarket data. The well known Heston model (see \cite{heston}) links the price $S_t$ and the squared volatility $V_t$ of an asset by parametrized joint SDEs of the form 
\begin{eqnarray}
dS_t &=& \mu S_t dt + \sqrt{V_t} S_t dW_1, 
\nonumber \\
dV_t &=& \kappa(\theta - V_t) dt + \sigma \sqrt{V_t} dW_2,
\nonumber
\end{eqnarray}
where the unknown positive parameters $\mu, \kappa,\theta,\sigma $ need to be estimated from asset price data 
$S_t$ only, since the squared instantaneous volatility $V_t$ is not directly available, and plays the part of our 
unobservable process $X_t \equiv V_t$ . 
In our indirect observability framework, volatility approximations $ Y_t^\eps $ based either on prices $S_t$ or on 
observed option prices become the observable processes. A classical volatility approximation is the \lq\lq{}realized volatility\rq\rq{} given by the sum of squared returns 
\begin{equation}\label{}
Y_t^\eps = \frac{1}{M\eps} \sum\limits_{k=1}^M (R_{t_k} - R_{t_{k-1}})^2, \quad \text{where~~} dR_t = \frac{dS_t}{S_t},
\end{equation}
In this equation, the time step size $ t_k - t_{k-1} = \eps$ and the window size $M= M(\eps)$ are user selected. 
The $L^2$ convergence of realized volatility to instantaneous volatility as $\eps \to 0 $ is studied in \cite{Nielsen1,Nielsen2}. In a companion paper to be published in \cite{ourfuturepaper}, we have proved that the pair $( Y_t^\eps , V_t,)$ verifies all the hypotheses of our indirect observability setup, and we have completed a detailed analysis of parameter estimation under indirect observability for generic Heston models (see also \cite{prenthesis}).

\paragraph{Averaged Multiscale Stochastic Systems.} 
Consider a \lq\lq{}slow-fast\rq\rq{} joint SDEs system (see \cite{book:past08} 
for overview and references) involving a (small) scale parameter $\eps$ and given by 
\begin{eqnarray}
dx_t &=& a(x_t,y_t) dt + b(x_t,y_t) dW_1(t) , 
\label{xt} \\
dy_t &=& \frac{1}{\eps} c(x_t,y_t) dt + \frac{1}{\sqrt{\eps}} d(x_t,y_t) dW_2(t),
\label{yt}
\end{eqnarray}
where $W_1(t)$ and $W_2(t)$ are independent Brownian motions and the coefficients $a,b,c,d$ are bounded 
smooth functions of $x,y$. 
Note that the diffusions $x_t$, $y_t$ actually depend on the scale parameter $\eps$.
Assume that for any fixed $x$, the \lq\lq{}fast\rq\rq{} SDE driving $y_t$ has a stationary distribution $q(y|x)$ verifying 
$\bE_q \, a(x,y) \ne 0$. Then under mild complementary conditions on $a,b,c,d$, and as $\eps \to 0$, the process $x_t$ converges in probability to the \lq\lq{}reduced dynamics\rq\rq{}
\begin{equation}
\label{Xt}
dX_t = A(X_t) dt + B(X_t)B^*(X_t) dW_1(t),
\end{equation}
where $A = \bE_q \, a(x,y)$ and $BB^* = \bE_q \, b b^*$. Convergence in probability implies $L^4$ convergence 
for variables bounded in $L^4$, and the $L^4$ convergence of $x_t$ to $X_t$ is proved in \cite{book:past08} for 
periodic coefficients and $x_t, y_t $ on a torus. In practical applications, one essentially wants to parametrize the slow asymptotic SDE \eqref{Xt} driving the unobservable process $X_t$, and the only realistically accessible data are 
generated by the approximating process $x_t$, since for small $\eps$ the $y_t$ data are too noisy to be reliably acquired. Hence the slow process $x_t$ plays the role of the observable process $Y_t^\eps$.

There are many practical applications when $A(X_t)$ has polynomial nonlinearities. When $X_t$ is one-dimensional this trivially corresponds to the case of gradient diffusions discussed earlier in this section. For multi-dimensional $X_t$ one can conjecture that the mixing rates for the process $X_t$ must obey exponential decay unless equation \eqref{Xt} posesses some unusual properties (e.g. special symmetries or existence of conserved quantities). It is then quite reasonable to expect exponential convergence to the equilibrium distribution and, thus, exponentially fast decorrelation rates of  lagged 4th moments for large lags. Moreover, exponentially fast decorrelation rates have been demonstrated numerically in many practical examples. 
So we expect that our key assumptions on the unobservable and observable processes will be satisfied for many multiscale examples of the form \eqref{Xt} and \eqref{xt}, respectively.

%%%%%%%%%%%%%%%%%%%%%%%%%%%%%%
\section{Conclusions} 

For stationary processes $X_t \in \bR^r$ which are not directly observable, but can be approximated in $L^4$ by observable processes $Y^{\eps}_t$ as $\eps \to 0$, we have developed a mathematical framework where the unknown vector $\btheta \in \bR^p$ parametrizing $X_t$ can be consistently and efficiently estimated from adequately sub-sampled observations of $Y^{\eps}_t $. The present paper extends substantially several of our earlier results  \cite{Azen1,Azen2,Azen3} to non-Gaussian stationary processes $X_t$.

We have focused on the \lq{}\lq{}sparsely parametrized\rq{}\rq{} situations where $\btheta \in \bR^p$ is a (generally non explicit) smooth function $G(\Psi_1,\ldots,\Psi_p)$ of $p$ lagged moments of order $\leq 2$ of $X_t$. We conjecture that this holds true when $X_t$ is a stationary multi-dimensional diffusion provided the matrix diffusion coefficients $a(x,\btheta)$ and the drift $b(x,\btheta)$ of $X_t$ are analytic in $x$ and $\btheta$.

The above setup leads us to study the class of parameter estimators of the form $\hat{\btheta}^\eps = G(\hat{\Psi}^{\eps})$,
 where $\hat{\Psi}^{\eps}$ is an observable empirical estimator of $\Psi$  based on the $N(\eps)$ sub-sampled
 observable data  $Y^\eps_{{n \Delta(\eps)}}$, with $n= 1 \ldots N(\eps)$. For parameter estimators such as
 $\hat{\btheta}^\eps$, analysis of consistency and speed of convergence is essentially  equivalent  to a similar but more
 technical analysis for observable subsampled estimators $\hat{K}_{Y^{\eps}(u)}$ of  the lagged covariances $K(u)$
 of $X_t$. Note that for $u>0$, estimators $\hat{K}_{Y^{\eps}(u)}$ involve only \emph{non-vanishing time lags}
 $u(\eps)$ (with $u(\eps) \to u>0$ as $\eps\to 0$), since vanishing time lags decrease robustness to data perturbations
(see \cite{Azen1,Azen3}).

We explicitly determine how to best choose the sub-sampling time step $\Delta(\eps)$ and the number $N(\eps)$ of observations in terms of the $L^4$ distance 
$\rho(\eps) = || Y^{\eps}_t - X_t ||_4$ (see equation \eqref{optimizedScheme}). Our asymptotically optimal sub-sampling schemes $N(\eps) \sim \rho^{-3}(\eps)$ and $\Delta(\eps) \sim \rho(\eps)$ are constructed  to simultaneously  minimize the amplitude  of estimation errors as well as the computational/observational  complexity due to both the  number $N(\eps)$ and the time span $S(\eps)=N(\eps) \Delta(\eps)$ of  sub-sampled  observable data. Indeed, in many practical situations such as joint dynamic modeling of observable stock prices and unobservable volatilities, both $N(\eps)$ and $S(\eps)$ must remain  rather moderate, even for intraday data. 
Our optimal sub-sampling results rely on a key hypothesis, stating that for $s \leq t \leq u \leq v$ the random variables    $X_s X_t$ and $X_{u+T} X_{v+T}$ decorrelate at an integrable rate $f(T) \to 0 $ when $T \to \infty$ (see \eqref{decay4}). This is generally true for the many practical situations where $X_t$ is a stationary multi-dimensional diffusion with exponentially fast mixing.

When $X_t$ is sparsely parametrized and has integrable decorrelation rate $f$, we 
prove that as $\eps \to 0$, the sub-sampled observable estimators of lagged covariances determined by our optimal sub-sampling scheme \eqref{optimizedScheme} converge in $L^2$ to the true lagged covariances of $X_t$, with $L^2$-speeds of convergence faster than  $c \rho(\eps)$ for some constant $c$, and that the decorrelation rate $f$ only affects the constant $c$ via the integral $I(f)$ of $f$. Our associated observable subsampled parameter estimators 
$\hat{\btheta}^\eps = G(\hat{\Psi}^{\eps})$ are then consistent in probability when $\eps \to 0$. In practical applications, the unknown $\btheta$ is of course a priori bounded, and then a natural   truncation of the estimators $\hat{\btheta}^\eps$ guarantees their $L^2$-convergence to $\btheta$ at $L^2$-speed faster than some constant multiple of $\rho(\eps)$.

Our work thus points out the pragmatic impact of numerical methods enabling fast evaluation of $\rho(\eps)$, to help determine nearly optimal sub-sampling schemes, as well as for computing approximate error bars on parameter estimators. We will study numerical applications of our approach for non-Gaussian $X_t$ in subsequent papers.

Our indirect observability study has strong practical consequences for a broad range of applications. Let us mention two examples.
In financial mathematics, our indirect observability setup potentially applies to many  stochastic volatility models  driving the price and volatility of a given asset. The observable $Y^{\eps}_t$ can then be a realized volatility estimated on a time window depending on $\eps$, and $X_t$ is the unobservable instantaneous volatility. For the well known Heston joint SDEs, our approach has enabled the construction of consistent and efficient explicit parameter estimators based on optimally sub-sampled realized volatility data \cite{our future paper, prenthesis}.

A second class of examples concerns complex multiscale systems driving atmospheric or ocean dynamics. In this case the numerical datasets generated by known high dimensional fluid evolution models can be analyzed by artificially inserting a small scaling parameter $\eps$ into the model to further accelerate the fast variables and numerically analyze 
(as $\eps$ varies) the behavior of parameter estimators for key parameters of the slow dynamics. 
This computer intensive version of our approach should yield both a concrete and efficient optimal sub-sampling scheme as well as approximate error bars for our parameter estimators. We will present detailed actual examples in further publications.

\subsubsection*{Acknowledgements.} 
I.T. and R.A. were supported in part by the NSF Grant DMS-1109582.

\appendix
\section{$L^2$- consistency for unobservable estimators of lagged 2nd order moments}
\label{ProofUnobservable}
In this appendix we present a detailed proof of theorem \ref{ThUnobservable}, which addresses the $L^2$- consistency results for the unobservable sub-sampled empirical estimators $\bar{X}^\eps$ and $\hat K_X^\eps(u)$ of means and lagged covariances. The hypotheses and notations are those of Theorem \ref{ThUnobservable}. 
Replacing $X_t$ by the centered process $X_t - \mu$ and setting $\mu=0$ is a trivial change in the proof , so we only need to consider 
the case where all $X_t$ are \textit{centered} and $\mu = 0$. 

\textbf{Step 1. Sums of decorrelation values.} 
For all $D>0$ and $j \geq 1$ one has $D f(jD) \leq \int_{(j-1)D}^{j D} f(T) dT $ since the decorrelation rate $f(T)$ is decreasing. This implies 
\begin{equation}\label{sum.fjD}
\sum_{j=1}^{\infty} f(j D) \leq \frac{1}{D}\sum_{j=1}^{\infty} \int_{(j-1)D}^{j D} f(T) dT = I(f) / D.
\end{equation}
Define the function $g(q, D)$ for all integers $q \geq 2$ and all $D>0$ by 
\begin{equation}\label{g(q)}
g(q, D) = \sum_{1 < m < n \leq 1+q} f( (n - m) D) = \sum_{j=1}^{q-1} j f(j D) .
\end{equation}
Due to \eqref{sum.fjD}, the following inequality holds for all $D>0$ and $q \geq 2 $ 
\begin{equation}\label{bound.g(q)}
g(q, D) \leq (q-1) \sum_{j=1}^{q-1} f(j D) \leq ( q-1) I(f) /D.
\end{equation}

\textbf{Step 2. Sub-sampled empirical means converge in $L^2$.} 
Fix an integer $j \in [ 1 \ldots r]$. Denote the $j$-th coordinates of $X_{n \Delta}$ and of the empirical mean estimator $\bar{X}^\eps$ by
\[
U_n = X_{n \Delta}(j) \quad \text{and} \;\; \bar{X}^\eps (j) = \frac{1}{N} (U_1 + \ldots + U_N).
\]
With the notation $s_j ^2 = \bE(U_n^2)$, this implies
\begin{equation}\label{EbarX2} 
N^2 \bE \left[(\bar{X}^\eps (j))^2 \right] = N s_j ^2 + 2 \sum_{1 \leq m < n \leq N } \bE[U_m U_n] .
\end{equation} 
Applying the decorrelation hypothesis \eqref{decay4} and the relations \eqref{g(q)}, \eqref{bound.g(q)}, we obtain
\[
\left| \sum_{1 \leq m < n \leq N } E(U_m U_n) \right| \leq \sum_{1 \leq m < n \leq N } f((n - m) \Delta) = 
g(N-1, \Delta) \leq I(f) \frac{N}{\Delta}.
\]
The definition of the $L^q$-norm also implies $s_j \leq \| X_t \|_2 \leq \| X_t \|_4 =\nu$.
Hence, \eqref{EbarX2} implies 
\begin{equation*}
\left( \| \bar{X}^\eps (j) \|_2 \right)^2 \leq \frac{\nu ^2 }{N} + \frac{ 2 I(f) }{N \Delta}.
\end{equation*}
For any $( r_1 \times r_2)$ random matrix $M$, and any $q \geq 1$ our definition of the norm $\|M\|^q$ implies 
\begin{equation} \label{boundMq}
\| M \|_q \leq (r_1 r_2)^{1/q} \max_{i, j} \| M_{i, j} \|_q.
\end{equation}

The inequality $\| \bar{X}^\eps \|_2 \leq \sqrt{r} \max_{j} \| \bar{X}^\eps (j) \|_2$, 
then yields, due to \eqref{boundMq}, 

\begin{equation*}
\| \bar{X}^\eps \|_2 \leq 
\sqrt{r} \left( \frac{\nu^2}{N} + \frac{ 2 I(f) }{N \Delta} \right)^{1/2} \leq 
\frac{1}{\sqrt{N \Delta} } \left( \nu (r \Delta)^{1/2} + (2 r I(f) )^{1/2} \right).
\end{equation*}
Since $\Delta(\eps) \to 0 $ this proves the $L^2$-bound in \eqref{L2L4speed} when $X_t$ is centered and hence in general.

\textbf{Step 3. Sub-sampled empirical means converge in $L^4$.}
Basic algebra yields the identities 
\begin{equation}\label{EbarX4}
N^4 \left( \bar{X}^\eps (j) \right) ^4 = \sum_{m, n, a, b \in [ 1 \ldots N ]} U_a U_b U_m U_n = S_1 + S_2 + 2 S_3 +24 S_4,
\end{equation}
where the sums $S_1$, $S_2$, $S_3$, and $S_4$ are defined by
\begin{eqnarray}
S_1 &=& \sum_{1 \leq m \leq N } U_m^4 ,
\nonumber \\
S_2 &=& \sum_{1 \leq m < n \leq N } \left[ 2 U_m^2 U_n^2 + U_m^3 U_n + U_m U_n^3 \right] ,
\nonumber \\
S_3 &=& \sum_{1 \leq a < m < n \leq N } 
\left[ U_a^2 U_m U_n + U_a U_m^2 U_n + U_a U_m U_n^2 \right] ,
\nonumber \\
S_4 &=& \sum_{1 \leq a < b < m < n \leq N } U_a U_b U_m U_n .
\nonumber 
\end{eqnarray}
Due to the assumption that the $L^4$ norm of $X_t$ is bounded uniformly by $\nu$,
one clearly has $|\bE( S_1) | \leq N \nu^4$ and $ | \bE(S_2) | \leq 4 N^2 \nu^4$.

Since we are considering the centered process $X_t$, $E(U_n) \equiv 0$, and 
for $a < m < n$ the decorrelation hypothesis implies
\[
\left| \bE(U_a^2 U_m U_n) \right| = \left| \bE(U_a^2 U_m U_n) - \bE(U_a^2 U_m) E (U_n) \right| \leq
f( (n-m) \Delta ).
\]
Similarly, one shows that 
$$
\left| \bE(U_a U_m^2 U_n) \right| \leq f( (n-m) \Delta ) \text{~~and~~} 
\left| \bE(U_a U_m U_n^2) \right| \leq f( (m-a) \Delta ).
$$ 
These bounds and definition \eqref{g(q)} yield (for $N \geq 3$)
\[
\left| \bE(S_3) \right| \leq \sum_{1 \leq a < m < n \leq N ]} \; 
\left[ 2 f( (n-m) \Delta ) + f( (m-a) \Delta ) \right] = 3 \sum_{2 \leq m \leq N-1 ]} g(m, \Delta)
\]
which implies, due to the bound \eqref{bound.g(q)}, 
\[
\left| \bE(S_3) \right| \leq 3 I(f) \sum_{2 \leq m \leq N-1 ]}(m-1) \leq \frac{3 I(f)}{2} \frac{N^2 }{\Delta}.
\]
As above, one also has $ | \bE(U_a U_b U_m U_n) | \leq f( (b-a) \Delta ) $ for $a < b < m < n $. 
The expressions of $S_4$ and $g(m, \Delta)$ then yield (for $N \geq 4$)
\[
\left| \bE(S_4) \right| \leq \sum_{1 \leq a <b <m <n \leq N ]} f( (b-a) \Delta ) =
\sum_{3 \leq m <n \leq N} g(m, \Delta) = \sum_{3 \leq m \leq N-1} (N-m) g(m, \Delta).
\]
Therefore, due to \eqref{bound.g(q)} we obtain for $N \geq 4$
\[
\left| \bE(S_4) \right| \leq \sum_{3 \leq m \leq N-1} (N-m) I(f) \frac{m}{\Delta} \leq 
\frac{I(f)}{4} \frac{N^3}{\Delta}.
\]
Finally, the bounds on $| \bE(S_k) |$, and equation \eqref{EbarX4} entail
\begin{equation}\label{boundEbarX4}
\bE \left[ \left( \bar{X}^\eps (j) \right)^4 \right] \leq \frac{5 \nu^4 }{N^2} +
\frac{3 I(f)}{N^2 \Delta} + \frac{ 6 I(f)}{N \Delta} \leq \frac{C}{N \Delta} 
\end{equation}
for some explicit constant $C$, since $N(\eps) \to \infty$ and $\Delta(\eps) \to 0 $ with $N(\eps) \Delta(\eps) \to \infty$. In particular for $\eps$ small enough, one can clearly take $C = 7 I(f)$.
Therefore, equations \eqref{boundMq} and \eqref{boundEbarX4} imply 
\[
\| \bar{X}^\eps \|_4 \leq r^{1/4} \max_{i, j} \| \bar{X}^\eps \|_4 \leq \frac{(r C)^{1/4}}{(N \Delta)^{1/4}} 
\]
which proves the $L^4$-bound in \eqref{L2L4speed}.

\textbf{Step 4. Convergence of empirical lagged covariance matrices estimators.} 
Introduce the short-hand notations $V_n = X_{n \Delta}$ and 
\begin{eqnarray}
\bar{V}_N &=& \bar{X}^\eps = \frac{1}{N} \sum_{n=1}^N \, V_n ,
\label{barV} \\
\tau \bar{V}_N &=& \tau \bar{X}^\eps = \frac{1}{N} \sum_{n=1}^N \, V_{n+ \kappa},
\label{taubarV} \\
W_N &=& \frac{1}{N} \sum_{n=1}^N \, V_n V_{n+ \kappa}^* .
\label{WN}
\end{eqnarray}
From the definition \eqref{DefCovXep}, the covariance matrix estimators $\hat{K}^\eps_X(u) $ can be rewritten as
\begin{equation}\label{hatK1}
\hat{K}^\eps_X(u) = W_N - \bar{V}_N (\tau \bar{V}_N)^* .
\end{equation}
First, we evaluate the term $\bar{V}_N (\tau \bar{V}_N)^* $ in the equation above. 
Impose $0 \leq u \leq A$ for some fixed $A$. Thus, by construction
\[
\| \bar{V}_N - \tau \bar{V}_N \|_4 \leq 2 \kappa \nu / N \leq 2 (u+ \Delta) \nu/N \leq 
2 (1+A) \nu \frac{1} {N \Delta}
\]
and applying the inequality \eqref{lemmaVW} one arrives at the following relation
\begin{equation}\label{barV2}
\| \bar{V}_N (\tau \bar{V}_N)^* - \bar{V}_N (\bar{V}_N)^* \|_2 \leq 
4 (1+A) \nu^2 \frac{1} {N \Delta}.
\end{equation}
Since $\mu = 0$, we also have 
$$ 
\| \bar{V}_N \|_4 \leq \frac{C}{(N \Delta)^{1/4}} ,
$$ 
as proven in Step 3. 
This implies, by inequality \eqref{lemmaVW}, 
\[
\| \bar{V}_N (\bar{V}_N)^* \|_2 \leq 2 \left[ \frac{C}{(N \Delta)^{1/4}} \right]^2 =
\frac{2 C^2}{(N \Delta)^{1/2}}
\]
which yields, due to equation \eqref{barV2}, 
\begin{equation}\label{barVtaubarV}
\| \bar{V}_N (\tau \bar{V}_N)^* \|_2 \leq 
\frac{2 C^2}{\sqrt{N \Delta}} + \frac{4 (1+A) \nu^2 } {N \Delta}.
\end{equation}
By the construction of $\kappa(u,\eps)$, the 
\lq\lq{}discrete\rq\rq{} lag $\kappa \Delta$ is close to continuous lag $u$ and 
$| \kappa \Delta - u | \leq \Delta$.
Since the true lagged covariance matrices $K(u)$ are locally Lipschitz, there is a constant $\lambda = \lambda(A)$ such that for 
all $0 \leq u \leq A$ and all $\eps >0$ 
the following deterministic inequality holds
\begin{equation} \label{du}
\| K(u) - K(\kappa \Delta ) \| \leq \lambda | u - \kappa \Delta | \leq \lambda \Delta.
\end{equation}

Next, we compare the term $W_N$ in the expression of the covariance estimator \eqref{hatK1}, with the true covariance matrix $K(\kappa \Delta)$ evaluated at the \lq\lq{}discretized \rq\rq{} time lag $\kappa \Delta$. 
Since $X_t$ is stationary, we have $K(\kappa \Delta) = \bE( V_n V_{n+ \kappa}^*) $ for all $n$, and formula \eqref{WN} implies that
\[
W_N - K(\kappa \Delta) = \frac{1}{N} \sum_{n=1}^N \; 
\left( \, V_n V_{n+ \kappa}^* - \bE[ V_n V_{n+ \kappa}^*] \, \right) .
\]

For any two coordinates $i, j \in [1 \ldots r]$ 
denote $T_n= V_n(i)$ and $U_n= V_n(j)$ as the $i$-th and $j$-th coordinates of $V_n$, respectively.
In addition, we also define
\[
H_n = T_n U_{n+ \kappa} - \bE[ T_n U_{n+ \kappa}].
\]
Clearly $\bE[H_n] = 0$ and 
the $(i, j)$ coefficient of the matrix $ M = W_N - K(\kappa \Delta)$ is then
\[
M_{i, j} = \frac{1}{N} \sum_{n=1}^N \, H_n
\]
and
\begin{equation}\label{EMij2}
N^2 \bE[M_{i, j}^2] = \sum_{(m, n) \in [1 \ldots N] } \bE[H_m H_n]. 
\end{equation}
%

%
%\[
%\bE[H_m H_n] = \bE[T_m U_{m + \kappa} T_n U_{n + \kappa}] - 
%\bE[T_m U_{m + \kappa}] \, \bE[T_n U_{n + \kappa}].
%\]
%
Next, we partition the summation interval in the expression above into two complementary sets, $Q^+$ and $Q^-$, as follows
\begin{itemize}
\item 
$(m,n) \in Q^+$ whenever $ | n - m | > \kappa $ and $m, n \in [1 \ldots N] $,
\item 
$(m,n) \in Q^-$ whenever $ | n - m | \leq \kappa $ and $m, n \in [1 \ldots N]$. 	
\end{itemize}
Due to bounded fourth moments of the process $X_t$
we have $\left| \bE[H_m H_n] \right| \leq 2 \nu^4 $. Moreover, 
\[
\text{cardinal} \, (Q^-) = N + \kappa (2N- \kappa -1) \leq 3 N \kappa, 
\]
and, therefore,
\begin{equation}\label{Q-}
\left| \sum_{(m,n) \in Q^-} \bE(H_m H_n) \right| \leq 6 \nu^4 N \kappa.
\end{equation}
For $(m,n) \in Q^+$, the decorrelation rate of the 2nd order moments yields 
\[
| \bE[H_m H_n] | \leq f( | n-m | \Delta ) , 
\]
so that 
\begin{equation}\label{Q+}
\left| \sum_{(m,n) \in Q^+} \bE[H_m H_n] \right| \leq \sum_{(m,n) \in Q^+} f( | n-m | \Delta ) .
\end{equation}
Thus, relation \eqref{EMij2} and inequalities \eqref{Q-}, \eqref{Q+} imply 
\begin{equation}\label{lastEMij2}
N^2 \bE[M_{i, j}^2] \leq \sum_{(m,n) \in Q^+} f( | n-m | \Delta ) + 6 \nu^4 N \kappa.
\end{equation}
Easy algebra transforms the double sum above into 
\[
\sum_{(m,n) \in Q^+} f( | n-m | \Delta ) = 
2 \sum_{\kappa +1 \leq s \leq N-1} ( N - s ) f( s \Delta ) \leq
2 N \sum_{1 \leq s \leq N-1}f( s \Delta ) \leq 2 I(f) \frac{N}{\Delta}, 
\]
where equations \eqref{sum.fjD} were used in the last inequality. 
Recall that for $0 \leq u \leq A$ and due to the construction of $\kappa$, one also has 
$$
\kappa \leq \frac{u}{\Delta} + \Delta \leq \frac{A+1}{\Delta} .
$$
Substituting the last two expressions into \eqref{lastEMij2} we obtain
\[
\bE[M_{i, j}^2] \leq \left( 2 I(f) + 6 (A+1) \nu^4 \right) \frac{1}{N \Delta }.
\]
By equation \eqref{boundMq} we further obtain
\begin{equation}\label{boundM}
\| W_N - K(\kappa \Delta) \|_2 = \| M \|_2 \leq 
r \left( 2 I(f) + 6 (A+1) \nu^4 \right)^{1/2} \frac{1}{\sqrt{N \Delta} }.
\end{equation}
Using the expression for $\hat{K}^\eps_X(u)$ in \eqref{hatK1} and
the triangle inequality we can write
\begin{equation*}
\| \hat{K}^\eps_X(u) - K(u) \|_2 \leq 
\| W_N - K(\kappa \Delta) \|_2 + \| K(\kappa \Delta) -K(u) \|_2 +
\| \bar{V}_N (\tau \bar{V}_N)^* \|_2 .
\end{equation*}
Combining the three bounds in \eqref{barVtaubarV}, \eqref{du}, and \eqref{boundM}, we obtain, for all $\eps > 0$ and $0 \leq u \leq A$, 
\begin{equation}
\| \hat{K}^\eps_X(u) - K(u) \|_2 \leq \frac{\Gamma }{\sqrt{N \Delta} } + \lambda \Delta,
\end{equation}
where 
\[
\Gamma = 2 C^2 + r \left( 2 I(f) + 6 (A+1) \nu^4 \right)^{1/2} + \frac{ 4 (1+A) \nu^2}{\sqrt{N \Delta}}.
\]
Moreover, for $\eps$ small enough, we can take $C^2 = \sqrt{7r I(f)}$ as discussed in Step 2, and 
${4 (1+A) \nu^2}{(N \Delta)^{-1/2}}$ will become much smaller than $\sqrt{r I(f)}$. 
Therefore, for $\eps$ small enough, one has (using $\sqrt{a+b} \leq \sqrt{a} + \sqrt{b}$)
a simplified expression for the constant $\Gamma$
\[ 
\Gamma \leq \gamma = 8 \sqrt{r I(f)} + 2.5 \nu^2 \sqrt{A+1}.
\]
This concludes the proof of Theorem \ref{ThUnobservable}.

\end{document}